\newtheorem{theorem}{Theorem}[section]
\newtheorem{lemma}[theorem]{Lemma}
\newtheorem{cor}[theorem]{Corollary}
\newtheorem{prop}[theorem]{Proposition}
\theoremstyle{definition}
\newtheorem{defn}[theorem]{Definition}
\newcommand{\Kr}{K_{r+1}}
\newcommand{\NN}{\mathbb{N}}
\newcommand{\N}{\mathbb{N}}
\newcommand{\HH}{\mathcal{H}}
\newcommand{\C}{\mathcal{C}}
\newcommand{\I}{\mathcal{I}}
\newcommand{\F}{\mathcal{F}}
\newcommand{\G}{\mathcal{G}}
\newcommand{\Q}{\mathcal{Q}}
\newcommand{\K}{\mathcal{K}}
\newcommand{\U}{\mathcal{U}}
\newcommand{\V}{\mathcal{V}}
\def\le{\leqslant}
\def\ge{\geqslant}
\def\eps{\varepsilon}
\def\ds{\displaystyle}
\title[]{The typical structure of graphs with no large cliques}
\author[J.~Balogh, N.~Bushaw, M.~Collares Neto, H.~Liu, R.~Morris and M.~Sharifzadeh]{J\'ozsef Balogh \and
	Neal Bushaw \and
	Maur\'icio Collares Neto \and
	Hong Liu \and \\
	Robert Morris \and
	Maryam Sharifzadeh}
\address{(JB): Department of Mathematical Sciences, University of Illinois at Urbana-Champaign, Urbana, Illinois 61801, USA \and Bolyai Institute, University of Szeged, Szeged, Hungary} \email{jobal@math.uiuc.edu}
\address{(NB): School of Mathematical and Statistical Sciences, Arizona State University, Tempe, AZ 85287 USA \and IMPA, Estrada Dona Castorina 110, Jardim Bot\^anico, Rio de Janeiro, RJ, Brasil} \email{neal@asu.edu}
\address{(MCN, RM): IMPA, Estrada Dona Castorina 110, Jardim Bot\^anico, Rio de Janeiro, RJ, Brasil} 
\email{collares|rob@impa.br}
\address{(HL, MS): Department of Mathematical Sciences, University of Illinois at Urbana-Champaign, Urbana, Illinois 61801, USA} \email{hliu36|sharifz2@illinois.edu}
\thanks{Research supported in part by a Simons Fellowship, NSF CAREER Grant DMS-0745185, Marie Curie FP7-PEOPLE-2012-IIF 327763, Arnold O. Beckman Research Award (UIUC Campus Research Board 13039) (JB), CAPES bolsa Proex (MCN), a CNPq bolsa PDJ (NB) and a CNPq bolsa de Produtividade em Pesquisa (RM)}
\begin{document}

\begin{abstract}
In 1987, Kolaitis, Pr\"omel and Rothschild proved that, for every fixed $r \in \NN$, almost every $n$-vertex $K_{r+1}$-free graph is $r$-partite. In this paper we extend this result to all functions $r = r(n)$ with $r \le (\log n)^{1/4}$. The proof combines a new (close to sharp) supersaturation version of the Erd\H{o}s--Simonovits stability theorem, the hypergraph container method, and a counting technique developed by Balogh, Bollob\'as and Simonovits. 
\end{abstract}

\maketitle

\section{Introduction}\label{sec:intro}

Determining the extremal properties of graphs which avoid a clique of a given size is one of the oldest problems in combinatorics, going back to the early paper of Mantel~\cite{Mantel} and the groundbreaking work of Ramsey~\cite{Ramsey}, Erd\H{o}s and Szekeres~\cite{ESz} and Tur\'an~\cite{Turan} over 70 years ago. The study of the \emph{typical} properties of such graphs was initiated by Erd\H{o}s, Kleitman and Rothschild~\cite{EKR}, who proved in 1976 that almost all triangle-free graphs on $n$ vertices are bipartite.\footnote{That is, the proportion of $n$-vertex triangle-free graphs that are not bipartite goes to zero as $n \to \infty$.} This result was extended to $K_{r+1}$-free graphs, for every fixed $r \in \NN$, ten years later by Kolaitis, Pr\"omel and Rothschild~\cite{KPR}, who showed that almost all such graphs are $r$-partite. Various extensions of this theorem have since been obtained, see for example~\cite{BBS2,PS} for work on other forbidden subgraphs, and~\cite{BMSW,OPT} for a sparse analogue. 

In this paper we extend the result of Kolaitis, Pr\"omel and Rothschild in a different direction, to $K_{r+1}$-free graphs where $r = r(n)$ is a function which is allowed to grow with $n$. More precisely, we prove the following theorem.\footnote{All logs are natural unless otherwise stated.}

\begin{theorem}\label{thm:main}
Let $r = r(n) \in \N_0$ be a function satisfying $r \le (\log n)^{1/4}$ for every $n \in \N$. Then almost all $K_{r+1}$-free graphs on $n$ vertices are $r$-partite.
\end{theorem}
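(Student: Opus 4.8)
Write $\F_n$ for the set of labelled $\Kr$-free graphs on $[n]$ and $\PP_n\subseteq\F_n$ for the subset of $r$-partite ones. Since every $r$-partite graph is $\Kr$-free we have $|\F_n|\ge|\PP_n|\ge 2^{t_r(n)}$, where $t_r(n):=\ex(n,\Kr)$ is the number of edges of the Tur\'an graph $T_r(n)$, so it suffices to show that the number of $\Kr$-free graphs on $[n]$ that are \emph{not} $r$-partite is $o(|\PP_n|)$. The plan is to split the $\Kr$-free graphs over a family of ``containers'': handle the containers that are far from extremal by a crude union bound (this is where $r\le(\log n)^{1/4}$ is used), and handle the containers that are close to $r$-partite by the Balogh--Bollob\'as--Simonovits counting idea, namely that a non-$r$-partite $\Kr$-free graph is obtained from an $r$-partite one by inserting a nonempty set of edges inside the parts, and almost no such insertion preserves $\Kr$-freeness.

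First I would apply the hypergraph container theorem to the $\binom{r+1}{2}$-uniform hypergraph on vertex set $E(K_n)$ whose edges are the copies of $\Kr$. Tracking the codegree estimates with their dependence on $r$ (harmless since $r$ is small), this produces a family $\C$ of graphs on $[n]$ such that every $\Kr$-free graph is a subgraph of some $G\in\C$, every $G\in\C$ spans at most $\eps\binom{n}{r+1}$ copies of $\Kr$ for a small constant $\eps$ of our choosing, and $|\C|\le\exp\!\big(n^{2-\Omega(1/r)}(\log n)^{O(1)}\big)$. I would then feed every container into the supersaturated version of the Erd\H os--Simonovits stability theorem proved earlier in the paper: since $G\in\C$ contains few copies of $\Kr$, it follows that either (i) $e(G)\le t_r(n)-\beta n^2$, or (ii) there is a partition $V_1\cup\dots\cup V_r$ of $[n]$ such that $G$ has at most $\beta n^2$ edges inside the parts, where $\beta=\beta(\eps,r)>0$ decays only polynomially with $1/r$.

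For the type-(i) containers the number of $\Kr$-free graphs they contain is at most $|\C|\cdot 2^{t_r(n)-\beta n^2}=\exp\!\big(t_r(n)+n^{2-\Omega(1/r)}(\log n)^{O(1)}-\beta n^2\big)$, which is $o(2^{t_r(n)})=o(|\PP_n|)$ as soon as $n^{2-\Omega(1/r)}(\log n)^{O(1)}\ll\beta n^2$; since $n^{2-\Omega(1/r)}=n^{2}e^{-\Omega((\log n)/r)}$ this asks for $(\log n)/r$ to be sufficiently large compared with $\log(1/\beta)$ and the implied logarithms, and carrying out this optimisation is exactly what forces $r\le(\log n)^{1/4}$. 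For a type-(ii) container $G$ with near-partition $\V=(V_1,\dots,V_r)$, I would first \emph{clean} $G$ -- discard the $o(n)$ vertices of abnormally low degree into some other part and move each remaining vertex into a part in which it has fewest neighbours -- so that $\V$ is near-balanced, $G$ is within $o(n^2)$ edges of the complete multipartite graph $K_{\V}$, and no vertex has many neighbours inside its own part. Writing a non-$r$-partite $\Kr$-free $F\subseteq G$ as $F=H\cup B$, where $H=F\cap K_{\V}$ is $r$-partite and $\emptyset\ne B\subseteq E(G)$ consists of edges inside the parts of $\V$, the point is that for a given $H$ and each $uv\in B$ (say $u,v\in V_i$) the $\Kr$-freeness of $H+uv$ forces the common $H$-neighbourhood of $u$ and $v$ to contain no transversal $K_{r-1}$ over the other $r-1$ parts; for the overwhelming majority of $H$ this is impossible for \emph{any} bad edge, since that common neighbourhood is typically of linear size in every part and a complete-$(r-1)$-partite-like graph on parts of size $\Omega(n/r)$ contains a transversal $K_{r-1}$ with high probability, as $(n/r)^{r-1}2^{-\binom{r-1}{2}}\to\infty$ when $r\le(\log n)^{1/4}$. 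A careful accounting of the few exceptional $H$ (those acquiring isolated or very low-degree vertices after the restriction to $K_{\V}$), together with the geometric decay of the number of admissible insertions $B$, should bound the number of non-$r$-partite $\Kr$-free graphs coming from type-(ii) containers by $o(|\PP_n|)$ as well, and combining the two cases finishes the proof.

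The step I expect to be the main obstacle is establishing the quantitatively sharp supersaturated stability theorem: one needs that a $\Kr$-free graph which is $\eps$-far from $r$-partite and has close to $t_r(n)$ edges already contains a \emph{positive proportion} -- more precisely at least $\beta(\eps,r)\binom{n}{r+1}$ copies, with $\beta$ only polynomially small in $\eps$ and in $1/r$ -- of all possible copies of $\Kr$, since it is the rate at which this supersaturation is allowed to degrade that, balanced against the container count $\exp(n^{2-\Omega(1/r)}(\log n)^{O(1)})$, produces the threshold $(\log n)^{1/4}$. A secondary, more technical difficulty is controlling all the $r$-dependent error terms simultaneously -- in the container parameters, in the cleaning of the type-(ii) containers, and in the estimate that a random $r$-partite subgraph of $K_{\V}$ admits essentially no addable bad edges -- so that none of them overtakes the savings established above.
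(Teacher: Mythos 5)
Your overall architecture --- containers plus a supersaturated stability theorem to kill the graphs far from $r$-partite, then a Balogh--Bollob\'as--Simonovits-type refinement for the graphs close to $r$-partite --- is exactly the paper's, and your first half (Theorems~\ref{supertheorem}, \ref{thm:mns} and~\ref{thm:structure:approx}) is essentially correct as sketched, modulo the fact that the supersaturation constant one actually gets is $e^{-O(r\log r)}$ rather than polynomial in $1/r$, which still suffices. Your necessary condition for the close case is also right: writing $F=H\cup B$ with $H=F\cap K_{\V}$, every interior edge $uv\in B$ must be individually addable to $H$, i.e.\ the common $H$-neighbourhood of $u$ and $v$ must contain no transversal $K_{r-1}$.

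The gap is in how you propose to convert this into a count. Bounding the number of bad $F$ by (number of exceptional $H$ admitting some addable interior edge) $\times$ (number of admissible $B$) fails numerically: by a Janson-type estimate the proportion of exceptional $H\subseteq K_{\V}$ is at best $\exp(-n^{1-o(1)})$, while for a single exceptional $H$ the number of interior edge-sets $B$ of size at most $n^{2-1/r^2}$ is as large as $\exp\big(n^{2-1/r^2}\log n\big)$, so the loss overwhelms the saving by a factor of nearly $\exp(n^2)$. What is really needed is that the saving scale with the \emph{amount} of badness, e.g.\ a bound on $\sum_H 2^{b(H)}$ where $b(H)$ is the number of addable interior pairs, and the events ``$uv$ is addable'' are strongly positively correlated (a sparse $H$ has $b(H)=\Theta(n^2/r)$), so this exponential moment cannot be read off from the first-moment estimate; your phrase ``geometric decay of the number of admissible insertions $B$'' is precisely the missing lemma rather than a consequence of what precedes it. The paper resolves this by stratifying $\Q(n,r)$ according to the number $m$ of vertex-disjoint bad $r$-sets (Definitions~\ref{def:mG}--\ref{def:Qnrtm}), handling $m\ge 2^{-8r}n$ by a direct count (Lemma~\ref{lem:mG:ubound}), and for small $m$ performing a switching/double-counting argument via the bipartite graphs $F_m$ (Lemmas~\ref{lem:outdegrees} and~\ref{lem:indegrees}), in which deleting and freely re-inserting the edges at the $mr$ vertices of $X(G)$ yields a saving $2^{-\Omega(mnr2^{-5r})}$ that does grow with $m$; this also requires the auxiliary facts that typical graphs are uniformly dense, internally sparse and balanced, and that all relevant optimal partitions of the preimages of a fixed $H$ are close to one another (Lemma~\ref{lem:diffnumparts}), which absorbs the $r^n$ factor for choosing the partition --- another point your sketch leaves unaddressed.
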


Note that if $r \ge 2 \log_2 n$ then almost all graphs are $K_{r+1}$-free (and almost none are $r$-partite if $r \ll n / \log n$), so the bound on $r$ in Theorem~\ref{thm:main} is not far from being best possible. It would be extremely interesting (and likely very difficult) to determine the largest $\alpha \in [1/4,1]$ such that the theorem holds for some function $r = (\log n)^{\alpha + o(1)}$. It may well be the case that this supremum is equal to $1$, though we are not prepared to state this as a conjecture.

Theorem~\ref{thm:main} improves a recent result of Mousset, Nenadov and Steger~\cite{MNS}, who showed that, for the same\footnote{In fact, a very slightly weaker theorem was stated in~\cite{MNS}, but a little additional case analysis easily gives the result for all $r \le (\log n)^{1/4}$.} family of functions $r = r(n)$, the number of $n$-vertex $K_{r+1}$-free graphs is 
\begin{equation}\label{eq:counting}
2^{t_r(n) + o(n^2/r)},
\end{equation}
where $t_r(n) = \textbf{ex}(n,K_{r+1})$ denotes the number of edges of the Tur\'an graph, the $r$-partite graph on $n$ vertices with the maximum possible number of edges. A bound of this type for fixed $r \in \NN$ was originally proved in~\cite{EKR}, and extended to an arbitrary (fixed) forbidden graph~$H$ in~\cite{EFR}. The problem for $H$-free graphs with $v(H) \to \infty$ as $n \to \infty$ was first studied by Bollob\'as and Nikiforov~\cite{BN}, who proved bounds corresponding to~\eqref{eq:counting} whenever $v(H) = o(\log n)$ and $\chi(H_n) = r+1$ is fixed. For more precise bounds for a fixed forbidden graph $H$, see~\cite{BBS1}, and for similar bounds in the hereditary (i.e., induced-$H$-free) setting, see~\cite{ABBM,BB,BT} and the references therein.

The proof of Theorem~\ref{thm:main} has three main ingredients. The first is the so-called `hypergraph container method', which was recently developed by Balogh, Morris and Samotij~\cite{BMS}, and independently by Saxton and Thomason~\cite{ST}. This method was used by Mousset, Nenadov and Steger to prove Theorem~\ref{thm:mns}, below, from which they deduced the bound~\eqref{eq:counting} using a supersaturation theorem of Lov\'asz and Simonovits~\cite{LS}. 

In order to obtain the much more precise result stated in Theorem~\ref{thm:main}, we will use the method of Balogh, Bollob\'as and Simonovits~\cite{BBS1,BBS2}, who determined the structure of almost all $H$-free graphs for every fixed graph $H$. This powerful technique (see Sections~\ref{sec:properties} and~\ref{proofsec}) allows one to compare the number of $K_{r+1}$-free graphs that are `close' to being $r$-partite, with the total number of $K_{r+1}$-free graphs. 

The missing ingredient is the main new contribution of this paper. In order to deduce from Theorem~\ref{thm:mns} a bound on the number of $K_{r+1}$-free graphs that are `far' from being $r$-partite, we will need an analogue of the Lov\'asz--Simonovits supersaturation result, mentioned above, for the well-known stability theorem of Erd\H{o}s and Simonovits~\cite{ESim}. Although a weak such analogue can easily be obtained via the regularity lemma, this gives bounds which are far from sufficient for our purposes. Instead we will adapt an argument due to F\"uredi~\cite{Furedi} in order to prove the following close-to-best-possible such result. We say that a graph $G$ is \emph{$t$-far from being $r$-partite}\footnote{Similarly, we say that $G$ is $t$-close to being $r$-partite if it is not $t$-far from being $r$-partite.} if $\chi(G') > r$ for every subgraph $G' \subset G$ with $e(G') > e(G) - t$.

  \begin{theorem}\label{supertheorem}
    For every $n,r,t \in \NN$, the following holds. Every graph $G$ on $n$ vertices which is $t$-far from being $r$-partite contains at least
    \begin{equation*}
      \frac{n^{r-1}}{e^{2r} \cdot r!}\left(e(G) + t - \left(1 - \frac{1}{r}\right) \frac{n^2}{2}\right)
    \end{equation*}
    copies of $K_{r+1}$. 
  \end{theorem}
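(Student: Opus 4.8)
The plan is to prove the statement by induction on $r$, adapting Füredi's inductive proof of the Erd\H{o}s--Simonovits stability theorem so as to carry the count of cliques through it. Two reductions come first. If $s := e(G) + t - (1-\tfrac1r)\tfrac{n^2}{2} \le 0$ there is nothing to prove, since the claimed bound is then non-positive; so assume $s > 0$. Writing $b_r(H)$ for the maximum number of edges in an $r$-partite subgraph of a graph $H$, and $\textrm{def}_r(H) := e(H) - b_r(H)$ for the least number of edges whose deletion makes $H$ $r$-colourable, the graph $G$ is $\textrm{def}_r(G)$-far from being $r$-partite with $\textrm{def}_r(G) \ge t$; since the asserted lower bound is increasing in $t$ we may assume $t = \textrm{def}_r(G)$, so that every partition $V(G) = V_1 \cup \dots \cup V_r$ leaves at least $t$ edges inside its parts. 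For $r=1$ the hypothesis forces $e(G) \ge t$, and as $G$ has exactly $e(G) \ge (e(G)+t)/e^2$ copies of $K_2$, the base case holds.

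For the inductive step the engine is Füredi's argument applied to a vertex $x$ of maximum degree $\Delta = \Delta(G)$: with $A := N(x)$ and $B := V(G)\setminus A$, every copy of $K_r$ in $G[A]$ extends through $x$ to a distinct copy of $K_{r+1}$ in $G$, so $\#K_r(G[A]) \le \#K_{r+1}(G)$, while $\sum_{v\in B} d(v) \le |B|\Delta$ gives $e(G[B]) \le |B|\Delta - e(G) + e(G[A])$, and splicing an optimal $(r-1)$-partition of $A$ with $B$ as an $r$-th class yields $t = \textrm{def}_r(G) \le \textrm{def}_{r-1}(G[A]) + e(G[B])$. Feeding into these the induction hypothesis for $G[A]$ (which bounds $e(G[A]) + \textrm{def}_{r-1}(G[A]) - (1-\tfrac1{r-1})\tfrac{\Delta^2}{2}$ by $\tfrac{e^{2(r-1)}(r-1)!}{\Delta^{r-2}}\#K_r(G[A])$, regardless of sign) together with the elementary inequality $|B|\Delta + (1-\tfrac1{r-1})\tfrac{\Delta^2}{2} \le (1-\tfrac1r)\tfrac{n^2}{2}$ gives $\#K_{r+1}(G) \ge \tfrac{\Delta^{r-2}}{e^{2(r-1)}(r-1)!}\,s$. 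This already produces cliques from the ``$+t$'' term, but it is short of the stated bound by a factor of order $n/r$, morally because each $K_r$ in $A$ was extended only through $x$. To recover this factor one works instead with a near-balanced $r$-partition $V_1,\dots,V_r$ of $G$ having few internal edges (again via Füredi-type reasoning) and, for each internal edge $uw \subseteq V_i$, counts the $\prod_{j\ne i}|V_j| \approx (n/r)^{r-1}$ ways to extend $\{u,w\}$ to a copy of $K_{r+1}$ by choosing one vertex from each remaining class, discarding those choices that fail to be cliques.

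The step I expect to be the main obstacle is controlling this count in the presence of imbalance among the $V_i$ and of cross-pairs of $G$ that are non-adjacent: imbalance shrinks the products $\prod_{j\ne i}|V_j|$, and each non-adjacent cross-pair spoils some candidate extensions. The saving grace is that both defects also shrink $s$ — one has $s = 2t - M - \tfrac12\sum_i(|V_i| - n/r)^2$, where $M$ is the number of non-adjacent cross-pairs — so the bookkeeping must be arranged so that the number of spoiled extensions is bounded by quantities of the same order as $M$ and the imbalance term (hence linear in $s$, not in $n^2$), and the partition chosen so as to keep these error terms in hand. Getting the $r$-dependence of this accounting right is precisely where the very generous constant $e^{2r}r!$ — far larger than the $r^{r-1}$ suggested by the extremal configuration — buys the room needed to close the induction.
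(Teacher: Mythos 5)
Your first half correctly reproduces F\"uredi's single-vertex argument (induction on $r$, base case $r=1$, the identity relating $e(G[A])$, $e(G[B])$ and $\sum_{v\in B}d(v)$, and the splicing bound $\mathrm{def}_r(G)\le \mathrm{def}_{r-1}(G[A])+e(G[B])$), and you are right that it only yields roughly $\frac{\Delta^{r-2}}{c(r-1)}\,s$, a factor of order $n/r$ below the target. The problem is that the second half, which is supposed to supply that missing factor, is a plan rather than a proof, and the obstacle you yourself flag is fatal to the plan as described. For an internal edge $uw\subset V_i$, the extensions spoiled by non-adjacent cross-pairs number about $\bigl(\bar d(u)+\bar d(w)\bigr)(n/r)^{r-2}+M(n/r)^{r-3}$, where $\bar d$ denotes cross non-degree; to retain a positive fraction of the $(n/r)^{r-1}$ candidate extensions you need $M\ll (n/r)^{2}$ and $\bar d(u)+\bar d(w)\ll n/r$. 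But your own identity $s=2t-M-\tfrac12\sum_i(|V_i|-n/r)^2$ only gives $M<2t$, and $t$ may be of order $n^2$, so $M$ can vastly exceed $(n/r)^2$; moreover nothing prevents an endpoint of an internal edge from having almost no cross-neighbours in some class. So the bookkeeping cannot be ``arranged'' in the way you hope without a substantially new idea, and none is given.

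The paper closes exactly this gap by a different device: instead of applying the induction hypothesis only in the neighbourhood of a maximum-degree vertex, it applies it in $B_v=N(v)$ for \emph{every} vertex $v$ (noting $G[B_v]$ is $(t-e(A_v))$-far from $(r-1)$-partite, where $A_v=V(G)\setminus N(v)$) and sums over $v$, each $K_{r+1}$ being counted $r+1$ times; this averaging supplies the factor $n/(r+1)$ automatically and never requires a balanced partition or any control of spoiled extensions. The price is a cross term $\sum_{v}\sum_{u\in A_v}d(u)d(v)^{r-2}$, which is bounded by $\sum_v d(v)^{r-1}(n-d(v))$ via H\"older's inequality applied to the symmetric set of non-adjacent ordered pairs; the proof then concludes by minimizing the resulting quadratic at $d(v)=\frac{r-1}{r}n$, and by using convexity of $x^{r-2}$ together with the observation that every graph is $(e(G)/r)$-close to being $r$-partite (so one may assume $e(G)\ge\frac{r-1}{r+1}\cdot\frac{n^2}{2}$) to bound $\sum_v d(v)^{r-2}$ from below. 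If you want to complete your write-up, replacing your partition-based second half with this ``sum over all vertices plus H\"older'' step is the missing ingredient.
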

  
  Note that the graph obtained by adding $t$ edges to the Tur\'an graph $T_r(n)$ is $t$-far from being $r$-partite and has roughly $t \cdot (n / r)^{r-1}$ copies of $K_{r+1}$, so Theorem~\ref{supertheorem} is sharp to within a factor of roughly $e^r$. We remark also that the Erd\H{o}s--Simonovits stability theorem for an arbitrary graph $H$ follows from Theorem~\ref{supertheorem} together with the well-known result of Erd\H{o}s~\cite{E64} that the Tur\'an density of any $k$-partite $k$-uniform hypergraph is zero. Indeed, given $c > 0$, every graph $G$ with $n$ vertices and $e(G) \ge t_r(n) - cn^2$ edges that is $2cn^2$-far from being $r$-partite contains at least $\eps n^{r+1}$ copies of $K_{r+1}$ for some $\eps = \eps(c,r) > 0$.  By the result of Erd\H{o}s, it follows that $G$ contains a copy of $K_{r+1}(s)$, the $s$-blow-up of $K_{r+1}$, as long as $n \ge n_0(c,r,s)$ is sufficiently large. We would like to thank Wojciech Samotij for pointing out to us this consequence of Theorem~\ref{supertheorem}.
  
  We will prove Theorem~\ref{supertheorem} in Section~\ref{supersection}, and use it  in Section~\ref{sec:structure} to count the $K_{r+1}$-free graphs that are $n^{2 - 1/r^2}$-far from being $r$-partite. We prove various simple properties of almost all $K_{r+1}$-free graphs in Section~\ref{sec:properties}, and finally, in Section~\ref{proofsec}, we use the Balogh--Bollob\'as--Simonovits method to deduce Theorem~\ref{thm:main}.

\section{A supersaturated Erd\H{o}s-Simonovits stability theorem}\label{supersection}

In this section, we prove our `supersaturated stability theorem' for $\Kr$-free graphs. As noted in the Introduction, we do so by adapting a proof of F\"uredi~\cite{Furedi}. 

Given a graph $G$, a vertex $v \in V(G)$ and an integer $m \in \N$, let us write $K_m(G)$ for the number of $m$-cliques in $G$, and $K_m(v)$ for the number of such $m$-cliques containing~$v$. 

\begin{proof}[Proof of Theorem~\ref{supertheorem}]
We will prove by induction on $r$ that
\begin{equation}\label{eq:super:inductionhypothesis}
K_{r+1}(G) \, \ge \, \frac{n^{r-1}}{c(r)}\left(e(G) + t - \left(1 - \frac{1}{r}\right) \frac{n^2}{2}\right),
\end{equation}
where $c(r) := 2 (r+1)^{r-1} r^{r-1} / r!$, for every graph $G$ on $n$ vertices that is $t$-far from being $r$-partite. Since $c(r) \le e^{2r} r!$, the theorem follows from~\eqref{eq:super:inductionhypothesis}.

Note first that the theorem holds in the case $r = 1$, since a graph is $t$-far from being 1-partite if and only if $e(G) \ge t$, and hence $G$ has at least $\frac{e(G) + t}{2}$ copies of $K_2$, as required. So let $r \ge 2$ and assume that the result holds for $r-1$. Let $n,t \in \NN$, and let $G$ be a graph that is $t$-far from being $r$-partite.  

First, for each $v \in V(G)$, set $B_v = N(v)$ (the set of neighbours of $v$ in $G$) and $A_v = V(G) \setminus B_v$, and observe that
    \begin{equation}\label{eq:super1}
      \sum_{u \in A_v} d(u) = e(G) + e(A_v) - e(B_v),
    \end{equation}
    where $e(X)$ denotes the number of edges in the graph $G[X]$. Now, the graph $G[B_v]$ is $\big( t - e(A_v) \big)$-far from being $(r-1)$-partite, and so, by the induction hypothesis, 
    \begin{equation}\label{eq:super2}
K_{r+1}(v) \, \ge \, \frac{|B_v|^{r-2}}{c(r-1)}\left(e(B_v) + t - e(A_v) - \left(1 - \frac{1}{r-1}\right) \frac{|B_v|^2}{2}\right),
    \end{equation}
since each copy of $K_r$ in $G[B_v]$ corresponds to a copy of $K_{r+1}$ in $G$ that contains $v$. 

Combining~\eqref{eq:super1} and~\eqref{eq:super2}, noting that $|B_v| = d(v)$, and summing over $v$, it follows that 
\begin{equation}\label{eq:super3}(r+1) \cdot K_{r+1}(G) \, \ge \, \sum_{v \in V(G)} \frac{d(v)^{r-2}}{c(r-1)}\bigg( e(G) + t - \sum_{u \in A_v} d(u) - \left(1 - \frac{1}{r-1}\right) \frac{d(v)^2}{2} \bigg).\end{equation}
We claim that
\begin{equation}\label{eq:superclaim}
\sum_{v \in V(G)} \sum_{u \in A_v} d(u) d(v)^{r-2} \le \sum_{v \in V(G)} \sum_{u \in A_v} d(v)^{r-1} = \sum_{v \in V(G)} d(v)^{r-1} \big( n - d(v) \big).
\end{equation}
Indeed, let $X = \big\{ (v,u) : v \in V(G), \, u \in A_v \big\}$ denote the set of ordered pairs in the sum above, and note that $(v,u) \in X$ if and only if $uv \not\in E(G)$. Since $X$ is symmetric, the inequality in~\eqref{eq:superclaim} is in fact an equality for $r = 2$, and for $r = 3$ we apply the Cauchy-Schwarz inequality to obtain
$$\sum_{(v,u) \in X} d(u) d(v) \le \bigg( \sum_{(v,u) \in X} d(u)^2 \bigg)^{1/2} \bigg( \sum_{(v,u) \in X} d(v)^2 \bigg)^{1/2}.$$
For $r \ge 4$, applying H\"older's inequality\footnote{The discrete version of H\"older's inequality states that $\sum_{i=1}^n |x_i y_i| \le \left( \sum_{i=1}^n |x_i|^p\right)^{1/p} \left(\sum_{i=1}^n |y_i|^q\right)^{1/q}$ for $n \in \mathbb{N}$, $x, y \in \mathbb{R}^n$ and $p, q \ge 1$ satisfying $1/p + 1/q = 1$.} with $p = r-2$ and $q = (r-2)/(r-3)$ gives
    \begin{equation*}
      \sum_{(v,u) \in X} d(u) d(v)^{r-2} \le  \bigg(\sum_{(v,u) \in X} d(u)^{r-2} d(v)\bigg)^{1/p} \bigg(\sum_{(v,u) \in X} d(v)^{r-1} \bigg)^{1/q},
    \end{equation*}
since $\big( r - 2 - \frac{1}{r-2} \big) \frac{r-2}{r-3} = \frac{r^2 - 4r + 3}{r-3} = r - 1$. Once again using the symmetry of $X$, and noting that $1 - 1/p = 1/q$, the claimed inequality~\eqref{eq:superclaim} follows. 
    
Combining the inequalities \eqref{eq:super3} and \eqref{eq:superclaim}, we obtain
$$(r+1) \cdot K_{r+1}(G) \, \ge \, \sum_{v \in V(G)} \frac{d(v)^{r-2}}{c(r-1)}\bigg( e(G) + t - d(v) n + \left( 1 + \frac{1}{r-1}\right) \frac{d(v)^2}{2} \bigg).$$
Since the factor in parentheses is minimized when $d(v) = \frac{r-1}{r} \cdot n$, it follows that
$$(r+1) \cdot K_{r+1}(G) \, \ge \, \sum_{v \in V(G)} \frac{d(v)^{r-2}}{c(r-1)}\bigg( e(G) + t - \left( 1 - \frac{1}{r}\right) \frac{n^2}{2} \bigg).$$
Finally, note that every graph $G$ is $\big( e(G) / r \big)$-close to being $r$-partite (take a random partition), and hence we may assume that $\big( 1 + \frac{1}{r} \big) e(G) \ge \big( 1 - \frac{1}{r} \big) \frac{n^2}{2}$, since otherwise the theorem is trivial. Thus, by the convexity of $x^{r-2}$, 
$$\sum_{v \in V(G)} d(v)^{r-2} \ge n \cdot \bigg( \frac{2e(G)}{n} \bigg)^{r-2} \ge \left(\frac{r-1}{r+1}\right)^{r-2} n^{r-1},$$
and so, since $c(r-1) \cdot (r+1)^{r-1} = c(r) \cdot (r-1)^{r-2}$, it follows that
$$K_{r+1}(G) \, \ge \, \frac{n^{r-1}}{c(r)} \bigg( e(G) + t - \left( 1 - \frac{1}{r}\right) \frac{n^2}{2} \bigg),$$
as claimed. 
\end{proof}

\section{An approximate structural result}\label{sec:structure}

In this section we will prove the following approximate version of Theorem~\ref{thm:main}. 

\begin{theorem}\label{thm:structure:approx}
Let $r = r(n) \in \N$ be a function satisfying $r \le (\log n)^{1/4}$ for each $n \in \NN$. Then almost all $\Kr$-free graphs on $n$ vertices are $n^{2-1/r^2}$-close to being $r$-partite.
\end{theorem}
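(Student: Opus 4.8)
The plan is to use the hypergraph container method, in the strong form already exploited by Mousset, Nenadov and Steger, to reduce the counting of $\Kr$-free graphs to counting subgraphs of a bounded number of ``container'' graphs, each of which is itself close to $r$-partite; the far-from-$r$-partite graphs are then killed off using Theorem~\ref{supertheorem}. Concretely, I would first invoke the container lemma to obtain a family $\mathcal{C}$ of graphs on $[n]$ with $|\mathcal{C}| \le 2^{o(n^2/r)}$ such that every $\Kr$-free graph $G$ on $[n]$ is a subgraph of some $C \in \mathcal{C}$, and each $C \in \mathcal{C}$ contains at most $o(n^{r+1})$ copies of $\Kr$ — in fact I want the sharper statement that each container has at most $t_r(n) + o(n^2/r)$ edges, which is exactly the content of the Mousset--Nenadov--Steger analysis (their Theorem~\ref{thm:mns}, referred to in the Introduction).

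Next comes the key dichotomy. Fix a container $C$. Either $C$ is already $n^{2-1/r^2}/2$-close to being $r$-partite, or it is not. In the first case, every $\Kr$-free $G \subseteq C$ is certainly $n^{2-1/r^2}$-close to being $r$-partite (removing the edges of $G$ outside a good $(r+1)$-colouring of a large subgraph of $C$ costs even fewer edges), so such containers contribute only graphs of the desired type. In the second case, $C$ is $t$-far from being $r$-partite with $t := n^{2-1/r^2}/2$, and since $e(C) \le t_r(n) + o(n^2/r) = (1-1/r)n^2/2 + O(n^2/r^2)$, Theorem~\ref{supertheorem} gives
\begin{equation*}
K_{r+1}(C) \;\ge\; \frac{n^{r-1}}{e^{2r} r!}\left( e(C) + t - \left(1-\frac1r\right)\frac{n^2}{2}\right) \;\ge\; \frac{n^{r-1}}{e^{2r} r!}\cdot \frac{n^{2-1/r^2}}{4}.
\end{equation*}
I would then use a standard ``deletion'' / supersaturation-to-counting argument: a container with $m$ copies of $\Kr$ contains at most $2^{e(C) - \Omega(m/n^{r-1})}$ subgraphs that are $\Kr$-free, because greedily one can find $\Omega(m/n^{r-1})$ edge-disjoint copies of $\Kr$ (or apply a weight/probabilistic deletion bound), and a $\Kr$-free subgraph must avoid at least one edge from each. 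With $m \ge n^{r-1+2-1/r^2}/(4 e^{2r} r!)$ this gives a saving of $\Omega\!\big(n^{2-1/r^2}/(e^{2r} r!)\big)$ in the exponent, which since $r \le (\log n)^{1/4}$ dominates $o(n^2/r)$ and even $n^{2-1/(2r^2)}$; multiplying by $|\mathcal{C}| \le 2^{o(n^2/r)}$, the total number of $\Kr$-free graphs that are $n^{2-1/r^2}$-far from being $r$-partite is at most $2^{t_r(n) - \omega(n)}$, say, which is negligible compared to the $2^{t_r(n)}$ lower bound for the total count of $\Kr$-free (indeed $r$-partite) graphs.

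The main obstacle, I expect, is making the error terms fit together in the regime $r = r(n) \to \infty$: one must check that the factor $e^{2r} r!$ lost in Theorem~\ref{supertheorem}, together with whatever $r$-dependence appears in the container bounds and in the edge-disjoint-clique counting, is still beaten by the gap between $n^{2-1/r^2}$ and the container error $o(n^2/r)$ and by the slack in the lower bound $2^{t_r(n)}$. Since $r \le (\log n)^{1/4}$ we have $e^{2r} r! = n^{o(1)}$ and $n^{2-1/r^2} = n^{2-o(1)} \gg n^2/r$, so there is room, but the bookkeeping — in particular verifying that edge-disjoint (rather than merely distinct) copies of $\Kr$ can be extracted in sufficient quantity, or alternatively running a Janson-type / entropy deletion bound uniformly in $r$ — is the delicate point and is where I would concentrate the technical effort.
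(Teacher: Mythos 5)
Your overall architecture is the same as the paper's (containers plus Theorem~\ref{supertheorem} applied with $t \approx n^{2-1/r^2}$, with the dichotomy on whether a container is close to or far from $r$-partite), and the reduction in the ``close'' case is correct, since closeness is inherited by subgraphs. But the final counting step in the ``far'' case has a genuine quantitative gap. You bound $e(C) \le t_r(n) + o(n^2/r)$ and then try to recover the excess via a deletion argument whose saving is, as you compute, of order $n^{2-1/r^2}/(e^{2r} r!)$ (in fact with a further loss of $2^{-\binom{r+1}{2}}$, which is harmless). You then assert that this saving ``dominates $o(n^2/r)$ and even $n^{2-1/(2r^2)}$''; both comparisons are backwards. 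For $2 \le r \le (\log n)^{1/4}$ we have $n^{-1/r^2} = e^{-(\log n)/r^2} \le e^{-\sqrt{\log n}}$, which is far smaller than $1/r \ge e^{-\frac14 \log\log n}$, so $n^{2-1/r^2} \ll n^2/r$ (and trivially $n^{2-1/r^2} < n^{2-1/(2r^2)}$). Hence your bound on the far-from-$r$-partite count is $2^{t_r(n) + o(n^2/r) - n^{2-1/r^2-o(1)}}$, whose exponent may exceed $t_r(n)$ by as much as $\Theta(n^2/\log n)$; the argument does not close. This is not mere bookkeeping: the deletion method saves only $n^{-o(1)}$ bits per excess edge of the container, while each excess edge costs a full bit in $2^{e(C)}$, so no amount of care in extracting edge-disjoint cliques repairs it.

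The fix is the one the paper uses, and it makes the deletion argument unnecessary. The container theorem (Theorem~\ref{thm:mns}) gives the clique bound $K_{r+1}(C) \le n^{r+1-2/r^2}$ for every container, not just an edge bound. If a container $C$ is $t$-far from being $r$-partite with $t = n^{2-1/r^2}$ (as it must be whenever it contains a $t$-far $\Kr$-free graph, since farness passes to supergraphs), then Theorem~\ref{supertheorem} gives $K_{r+1}(C) \ge \frac{n^{r-1}}{e^{2r} r!}\bigl(e(C) + t - (1-\frac1r)\frac{n^2}{2}\bigr)$; comparing with the upper bound $n^{r+1-2/r^2}$ and using $e^{2r} r! \cdot n^{2-2/r^2} \ll t$ forces $e(C) \le (1-\frac1r)\frac{n^2}{2} - \frac{t}{2}$. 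So far containers do not have an edge \emph{excess} over $t_r(n)$ to be beaten down --- they have an edge \emph{deficit} of $t/2$ --- and the trivial bound $2^{e(C)}$ on the number of their subgraphs, multiplied by $|\C_n| \le \exp(n^{2-2/r^2})$, already gives $\ll 2^{t_r(n)}$.
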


Theorem~\ref{thm:structure:approx} is a straightforward consequence of Theorem~\ref{supertheorem} and the following `container' theorem, which was proved by Mousset, Nenadov and Steger~\cite{MNS} using the hypergraph container method of Balogh, Morris and Samotij~\cite{BMS} and Saxton and Thomason~\cite{ST}. The following theorem is slightly stronger than the result stated in~\cite{MNS}, but follows easily from essentially the same proof. We remark that the deduction of this theorem from the main results of~\cite{BMS,ST} is the only point in the proof of Theorem~\ref{thm:main} where we use our assumption that $r \le (\log n)^{1/4}$, although in Sections~\ref{sec:properties} and~\ref{proofsec} we will require a similar (but somewhat weaker) upper bound on $r$. 

\begin{theorem}\label{thm:mns}
Let $r = r(n) \in \N$ be a function satisfying $r \le (\log n)^{1/4}$. Then there exists a collection $\C$ of graphs such that the following hold for each sufficiently large $n \in \NN$:
\begin{enumerate}
\item[$(a)$] every $\Kr$-free graph on $n$ vertices is a subgraph of some $G \in \C_n$,
\item[$(b)$] $\Kr(G) \le n^{r + 1 - 2/r^2}$ for every $G \in \C_n$, and
\item[$(c)$] $|\C_n| \le \exp\big( n^{2 - 2/r^2} \big)$,
\end{enumerate}
where $\C_n = \big\{ G \in \C : v(G) = n \big\}$. 
\end{theorem}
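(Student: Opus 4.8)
The plan, following Mousset, Nenadov and Steger~\cite{MNS}, is to encode the $\Kr$-free graphs on $[n]$ as the independent sets of a hypergraph and feed that hypergraph into the container lemma of Balogh, Morris and Samotij~\cite{BMS} and Saxton and Thomason~\cite{ST}. Let $\HH = \HH(n,r)$ be the $k$-uniform hypergraph, $k := \binom{r+1}{2}$, with vertex set $V(\HH) = E(K_n)$ (so $N := |V(\HH)| = \binom{n}{2}$) and with hyperedges the edge sets of the copies of $\Kr$ in $K_n$; the independent sets of $\HH$ are exactly the $\Kr$-free subgraphs of $K_n$. The container lemma produces a family $\C_n$ of ``containers'' — subgraphs of $K_n$, each of which contains every $\Kr$-free graph that lies inside it — and taking $\C = \bigcup_n \C_n$ makes $(a)$ immediate. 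The task is then to choose the parameters so that $\C_n$ is small, giving $(c)$, and so that its members span few copies of $\Kr$, giving $(b)$.

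The heart of the matter is checking the co-degree hypotheses of the container lemma with every estimate made effective in $r$ (equivalently in $k \le O(\sqrt{\log n})$). Since $e(\HH) = \binom{n}{r+1}$, the average degree of $\HH$ is $d \asymp k\binom{n}{r+1}/\binom{n}{2} \asymp n^{r-1}/(r-1)!$, while for $2 \le j \le k$ a set of $j$ vertices of $\HH$ — that is, $j$ edges of $K_n$ — spans at least $v(j)$ vertices of $K_n$, where $v(j)$ is the least integer with $\binom{v(j)}{2} \ge j$, and so lies in at most $\binom{n-v(j)}{r+1-v(j)} \le n^{r+1-v(j)}$ hyperedges; hence $\Delta_j(\HH) \le n^{r+1-v(j)}$. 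The dominant constraint is $j = 2$: two edges span at least $3$ vertices, so $\Delta_2(\HH)/d \asymp (r-1)!\,/\,n$, which forces $\tau$ to be within an $n^{o(1)}$ factor of $1/n$. With $\tau = n^{-1+o(1)}$ the full co-degree function of the lemma becomes as small as required, and one application yields $\exp\big(O(\tau N\log(1/\tau))\big) = \exp\big(n^{1+o(1)}\big)$ containers, each spanning at most a constant fraction of the copies of $\Kr$.

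To reach $(b)$ — containers with at most $n^{r+1-2/r^2}$ copies of $\Kr$, i.e.\ at most a $\asymp (r+1)!\,n^{-2/r^2} = n^{-2/r^2+o(1)}$ fraction of $e(\HH)$ — one iterates the container lemma (or applies it in its iterated form), at each round applying it to the sub-hypergraph of $\HH$ induced on the current container; a container whose edge density is still bounded away from $1 - 1/r$ contains, by a supersaturation bound such as Theorem~\ref{supertheorem} (or that of Lov\'asz--Simonovits), enough copies of $\Kr$ that its degree parameters still satisfy the co-degree conditions, so a choice of $\tau$ within an $n^{o(1)}$ factor of $n^{-1}$ works throughout and each round multiplies $|\C_n|$ by a further $\exp(n^{1+o(1)})$ factor. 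Only $O\big((\log n)/r^2\big) = n^{o(1)}$ rounds are needed, so in the end $|\C_n| \le \exp\big(n^{1+o(1)}\big) \le \exp\big(n^{2-2/r^2}\big)$, which is $(c)$; the $\Kr$-free graphs with fewer than $n^{2-3/r^2}$ edges — of which there are at most $\exp\big(n^{2-3/r^2}\log n\big) \le \exp\big(n^{2-2/r^2}\big)$, and each of which automatically has at most $\big(2n^{2-3/r^2}\big)^{(r+1)/2} \le n^{r+1-2/r^2}$ copies of $\Kr$ — can be adjoined to $\C_n$ as their own containers to cover the sparse independent sets. The main obstacle throughout is precisely this book-keeping: one must track the dependence on $r$ of every constant — the dangerous ones being $r! = \exp\big(\Theta(r\log r)\big)$, the uniformity-dependent factors $2^{\binom{r+1}{2}} = \exp\big(\Theta(r^2)\big)$, and the $O\big((\log n)/r^2\big)$ iteration count — and verify that their combined contribution is $n^{o(1)}$, so that $\tau$ can be kept within an $n^{o(1)}$ factor of $n^{-1}$ and the container count within an $n^{o(1)}$ factor of $\exp(n)$. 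This is exactly where — and, as the authors note, essentially the only place in the proof of Theorem~\ref{thm:main} where — the hypothesis $r \le (\log n)^{1/4}$ is used: it makes $r! \le \exp\big((\log n)^{1/4}\log\log n\big) = n^{o(1)}$ and $2^{\binom{r+1}{2}} \le \exp\big(O(\sqrt{\log n})\big) = n^{o(1)}$, keeping all the iterated error terms negligible.
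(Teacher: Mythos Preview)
The paper does not itself prove Theorem~\ref{thm:mns}; it is quoted from Mousset, Nenadov and Steger~\cite{MNS}, with the remark that the stated form ``follows easily from essentially the same proof''. Your overall plan --- encode $\Kr$-free graphs as independent sets of the $\binom{r+1}{2}$-uniform clique hypergraph and iterate the container lemma, tracking the $r$-dependence of all constants --- is exactly that of~\cite{MNS}, and is also what the paper's own (commented-out) sketch indicates.

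There is, however, a concrete error in your co-degree analysis. You assert that the dominant constraint is $j=2$ and hence that $\tau$ can be taken within an $n^{o(1)}$ factor of $n^{-1}$. In fact the constraints for larger $j$ are strictly tighter. Writing $v(j)$ for the least integer with $\binom{v(j)}{2} \ge j$, one has $\Delta_j(\HH) \le n^{r+1-v(j)}$ and $d \asymp n^{r-1}/(r-1)!$, so the condition $\Delta_j(\HH) \lesssim \tau^{j-1} d$ requires $\tau \gtrsim n^{-(v(j)-2)/(j-1)}$. For $j=2$ this gives $\tau \gtrsim n^{-1}$, but already for $j=3$ (three edges can form a triangle, so $v(3)=3$) it gives $\tau \gtrsim n^{-1/2}$, and at $j = k = \binom{r+1}{2}$ (where $v(k)=r+1$) it gives $\tau \gtrsim n^{-(r-1)/(k-1)} = n^{-2/(r+2)}$, which is the binding constraint for $r \ge 2$. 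With $\tau \approx n^{-1}$ the co-degree hypothesis fails for every $j \ge 3$, and the container lemma cannot be invoked at all; your iteration therefore does not get off the ground. The correct choice, as in~\cite{MNS} (and in the paper's commented-out sketch, which takes $p = n^{-2/(r+1)}$), yields a per-round container count of order $\exp\big(n^{2-2/(r+1)+o(1)}\big)$ rather than $\exp\big(n^{1+o(1)}\big)$; since $2/(r+1) \ge 2/r^2$ for all $r \ge 2$, this still comfortably gives $(c)$, but the numerology in your sketch needs to be redone with this larger value of $\tau$.
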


Deducing Theorem~\ref{thm:structure:approx} from Theorems~\ref{supertheorem} and~\ref{thm:mns} is straightforward.

\begin{proof}[Proof of Theorem~\ref{thm:structure:approx}]
For each $t \in \N$, set 
$$\F_t \, = \, \bigg\{ G \,:\, e(G) \ge \left( 1 - \frac{1}{r} \right) \frac{v(G)^2}{2} - \frac{t}{2} \text{ and $G$ is $t$-far from being $r$-partite} \bigg\},$$
and observe that if $G \in \F_t$, then 
$$K_{r+1}(G) \ge \ds\frac{v(G)^{r-1} \cdot t}{e^{2r+1} \cdot r!},$$ 
by Theorem~\ref{supertheorem}. Therefore, letting $\C$ be the collection of graphs given by Theorem~\ref{thm:mns}, and setting $t = n^{2 - 1/r^2}$, it follows from property~$(b)$ and the bound $r \le (\log n)^{1/4}$ that we have $\C_n \cap \F_t = \emptyset$ for all sufficiently large $n \in \N$. 

Now, for each $\Kr$-free graph $G$ on $n$ vertices that is $n^{2-1/r^2}$-far from being $r$-partite, we have $G \subset C$ for some $C \in \C_n$, and by the observations above and the definition of $\F_t$, it follows that 
$$e(C) \le \left( 1 - \frac{1}{r} \right) \frac{n^2}{2} - \frac{t}{2}.$$
Therefore, summing over all members of $\C_n$, the number of such graphs is at most
$$\exp\big( n^{2 - 2/r^2} \big) \cdot 2^{t_r(n) - t/2} \,\ll\, 2^{t_r(n) - t/4},$$
which is clearly smaller than the number of $\Kr$-free graphs on $n$ vertices, as required.
\end{proof}

\section{Some properties of a typical $\Kr$-free graph}\label{sec:properties}

In this section we will prove some useful structural properties of almost all $\Kr$-free graphs. These structural properties will allow us (in Section~\ref{proofsec}) to count the $\Kr$-free graphs that are close to being $r$-partite, and hence to complete the proof of Theorem~\ref{thm:main}. We emphasize that the lemmas in this section were all proved for fixed $r \in \N$ in~\cite{BBS1}, and no extra ideas are required in order to extend their proofs to our more general setting. 

Let us fix throughout this section a function $2 \le r = r(n) \le (\log n)^{1/4}$, and let us denote by $\G$ the collection of $\Kr$-free graphs on $n$ vertices that are $n^{2-1/r^2}$-close to being $r$-partite, where $n$ is assumed to be sufficiently large. We begin with two simple definitions.

\begin{defn}[Optimal partitions]
An $r$-partition $(U_1,\ldots,U_r)$ of the vertex set of a graph $G$ is called \emph{optimal} if the number of interior edges, $\sum_{i = 1}^r e(U_i)$, is minimized.
\end{defn}

\begin{defn}[Uniformly dense graphs]
We say that a graph $G$ is \emph{uniformly dense} if for every optimal $r$-partition $(U_1,\ldots,U_r)$ and every $i,j \in [r]$ with $i \ne j$, we have 
\begin{equation}\label{eq:reg}
e(A,B) > \frac{|A||B|}{32}
\end{equation}
for every $A \subset U_i$ and $B\subset U_j$ with $|A| = |B| \ge 2^{-10r} n$.
\end{defn}

\begin{lemma}\label{lem:struc:reg}
The number of graphs in $\G$ that are not uniformly dense is at most
$$2^{t_r(n) - 2^{-22r} n^2},$$ 
and therefore almost all $\Kr$-free graphs are uniformly dense.
\end{lemma}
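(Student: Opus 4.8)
The plan is to bound the number of ``bad'' graphs in $\G$ — those admitting an optimal $r$-partition $(U_1,\dots,U_r)$ and indices $i\ne j$ together with sets $A\subset U_i$, $B\subset U_j$ with $|A|=|B|\ge 2^{-10r}n$ and $e(A,B)\le |A||B|/32$ — by a first-moment / encoding argument. First I would fix the ``shape'' of such a witness: the partition into parts $U_1,\dots,U_r$, the pair $(i,j)$, and the subsets $A,B$. The number of choices for the partition is at most $r^n$, the number of choices of $(i,j)$ at most $r^2$, and the number of choices of $A$ and $B$ at most $4^n$, so all of this ``bookkeeping'' data costs at most $r^n\cdot r^2\cdot 4^n = 2^{o(n^2)}$ possibilities. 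The point is that $2^{o(n^2)}$ is negligible against the target saving $2^{-22r}n^2$, since $r\le(\log n)^{1/4}$ forces $2^{-22r}n^2 = n^{2-o(1)}\gg n\log n$.

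Next, for a fixed shape I would count the $\Kr$-free graphs $G\in\G$ realizing it. The edges of $G$ split into three groups: those inside a single part $U_\ell$; those between $A$ and $B$; and all remaining ``crossing'' edges. The crossing edges not of the form $A$--$B$ number at most $t_r(n) - |A||B|$ (since the crossing edges live in the complete $r$-partite graph on $(U_1,\dots,U_r)$, which has at most $t_r(n)$ edges, and $|A||B|$ of its slots lie between $A$ and $B$), so they contribute a factor at most $2^{t_r(n)-|A||B|}$. The $A$--$B$ edges, of which there are at most $|A||B|/32$, contribute at most $\sum_{m\le |A||B|/32}\binom{|A||B|}{m}\le 2^{H(1/32)|A||B|} \le 2^{|A||B|/4}$ by the standard binomial-tail bound (here $H$ is the binary entropy function and $H(1/32)<1/4$). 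Finally I need to control the interior edges $\sum_\ell e(U_\ell)$: this is where I would invoke that the partition is \emph{optimal} and that $G$ is $n^{2-1/r^2}$-close to being $r$-partite. Closeness gives that some subgraph $G'$ of $G$ with $e(G')> e(G)-n^{2-1/r^2}$ is $r$-partite, hence has no interior edges with respect to its own optimal partition; comparing with the optimal partition of $G$ and using a standard stability/min-cut argument (as in~\cite{BBS1}) yields $\sum_\ell e(U_\ell) = O(n^{2-1/r^2})$, so the interior edges contribute only a $2^{o(n^2)}$ factor. Multiplying the three contributions, each bad graph of a fixed shape is counted at most
$$2^{t_r(n) - |A||B|}\cdot 2^{|A||B|/4}\cdot 2^{o(n^2)} \le 2^{t_r(n) - |A||B|/2}$$
times, and since $|A||B|\ge 2^{-20r}n^2$ this is at most $2^{t_r(n) - 2^{-21r}n^2}$.

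Summing over all $2^{o(n^2)}$ shapes absorbs harmlessly into the exponent, giving a total of at most $2^{t_r(n) - 2^{-22r}n^2}$ bad graphs, which is the claimed bound; since $t_r(n)\ge(1-1/r)n^2/2 \ge n^2/4$ and the number of $\Kr$-free graphs in $\G$ is (by Theorem~\ref{thm:structure:approx}, or trivially by counting subgraphs of the Tur\'an graph plus a few interior edges) at least $2^{t_r(n)-o(n^2)}$, the bad graphs form a vanishing proportion, so almost all $\Kr$-free graphs are uniformly dense. The main obstacle I anticipate is the last ingredient of the per-shape count — getting a genuine $O(n^{2-1/r^2})$ (or even $o(n^2/r)$) bound on the number of interior edges of an optimal partition of a graph that is merely $n^{2-1/r^2}$-close to $r$-partite; this requires care because swapping a vertex between parts changes the interior-edge count, and one must argue via optimality that no vertex has too many neighbours inside its own part, then sum. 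This is exactly the type of argument carried out in~\cite{BBS1}, and I would follow it, checking that the dependence on $r$ remains subpolynomial under $r\le(\log n)^{1/4}$.
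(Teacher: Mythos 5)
Your proposal is correct and follows essentially the same encoding argument as the paper: fix the partition, the pair of parts and the sets $A,B$, then bound separately the choices for the sparse $A$--$B$ edges (binomial tail), the remaining crossing edges (at most $2^{t_r(n)-|A||B|}$), and the interior edges. The one step you flag as the main obstacle is in fact immediate and needs no stability or vertex-swapping argument: since $G$ is $n^{2-1/r^2}$-close to being $r$-partite, some $r$-partition of $V(G)$ has fewer than $n^{2-1/r^2}$ interior edges, and an \emph{optimal} partition by definition has at most that many, giving the $\binom{n^2}{n^{2-1/r^2}} = 2^{o(2^{-22r}n^2)}$ factor directly.
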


\begin{proof}
In order to count such graphs, we first choose the optimal partition $\U = (U_1,\ldots,U_r)$, the pair $\{i,j\} \subset [r]$, and the sets $A \subset U_i$ and $B\subset U_j$ for which~\eqref{eq:reg} fails. We then choose the edges between $A$ and $B$, and finally the remaining edges. Note first that we have at most $r^n$ choices for $\U$, at most $r^2$ choices for $\{i,j\}$, and at most $2^{2n}$ choices for the pair $(A,B)$. 

Now, the number of choices for the edges between $A$ and $B$ is at most
$$\sum_{k = 0}^{|A||B|/32} \binom{|A||B|}{k} \, \le \, n^2 (32e)^{|A||B|/32} \, \le \, 2^{|A||B|/4},$$
and the number of choices for the remaining edges is at most
$$2^{t_r(n) - |A||B|}\binom{n^2}{n^{2-1/r^2}} \, \le \, 2^{t_r(n) - |A||B|} \exp\Big( n^{2-1/r^2} \log n \Big) \, \le \, 2^{t_r(n) - |A||B|/2},$$
since $\U$ is optimal, $|A||B| \ge 2^{-20r} n^2$, and each $G \in \G$ is $n^{2-1/r^2}$-close to being $r$-partite.

It follows that the number of graphs in $\G$ that are not uniformly dense is at most
$$r^{n+2} \cdot 2^{2n} \cdot 2^{t_r(n) - |A||B|/4} \, \le \, 2^{t_r(n) - 2^{-22r} n^2},$$
as claimed. 
\end{proof}

Our next definition controls the maximum degree inside the parts of an optimal partition. 

\begin{defn}[Internally sparse graphs]\label{def:internal}
A graph $G$ is said to be \emph{internally sparse} if, for every optimal partition $\U = (U_1,\ldots,U_r)$ of $G$, we have
\begin{equation}\label{eq:internal:degrees}
\Delta\big( G[U_i] \big) \, \le \, 2^{-5r} n.
\end{equation}
for every $1 \le i \le r$. Otherwise we say that $G$ is \emph{internally dense}. 
\end{defn}

\begin{lemma}\label{lem:struc:maxdeg}
If $G \in \G$ is internally dense then it is not uniformly dense.
\end{lemma}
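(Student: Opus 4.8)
\emph{Proof plan.} Suppose $G\in\G$ is internally dense. Then there is an optimal $r$-partition $\U=(U_1,\dots,U_r)$ of $G$ and an index $i$ with $\Delta\big(G[U_i]\big)>2^{-5r}n$; fix $v\in U_i$ with $|N(v)\cap U_i|>2^{-5r}n$. The first step is to upgrade this into control over \emph{every} part. Moving $v$ from $U_i$ to any other part $U_j$ yields another $r$-partition whose number of interior edges exceeds that of $\U$ by exactly $|N(v)\cap U_j|-|N(v)\cap U_i|$, so optimality of $\U$ forces $|N(v)\cap U_j|\ge|N(v)\cap U_i|>2^{-5r}n$ for every $j$. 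Writing $B_l:=N(v)\cap U_l$ and $\sigma:=2^{-5r}n$, we thus have $|B_l|\ge\sigma$ for all $l\in[r]$, and the graph $F:=G[N(v)]$ is $K_r$-free, since a $K_r$ in $F$ together with $v$ would be a $\Kr$ in $G$. It therefore suffices to find distinct $a,b$ and sets $A\subseteq B_a\subseteq U_a$, $B\subseteq B_b\subseteq U_b$ with $|A|=|B|\ge 2^{-10r}n$ and $e_G(A,B)\le|A||B|/32$: this is exactly a witness that $G$ is not uniformly dense. (Note the hypothesis $G\in\G$ is used only through $\Kr$-freeness.)

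I would obtain such a pair from the following self-contained claim, proved by induction on the number of blocks: \emph{if $F$ is a $K_k$-free graph whose vertex set is partitioned into $B_1,\dots,B_k$ with $|B_l|\ge\sigma$ for every $l$, then some two blocks contain subsets $A,B$ with $|A|=|B|\ge 32^{-(k-2)}\sigma$ and $e_F(A,B)\le|A||B|/32$.} The base case $k=2$ is immediate, as $F$ has no edges and any two blocks work with gain $\sigma$. For $k\ge 3$, either (Case A) some $x\in B_1$ satisfies $|N_F(x)\cap B_l|\ge\sigma/32$ for all $l\ge 2$, in which case $F[N_F(x)]$ is $K_{k-1}$-free with the $k-1$ blocks $N_F(x)\cap B_l$ of size $\ge\sigma/32$ and we apply the induction hypothesis; or (Case B) no such $x$ exists, so $B_1$ is covered by the $k-1$ sets $Y_l:=\{x\in B_1:|N_F(x)\cap B_l|<\sigma/32\}$ and hence some $|Y_l|\ge\sigma/(k-1)$. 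In Case B set $p:=\min(|Y_l|,|B_l|)\ge\sigma/(k-1)$, let $A$ be any $p$-subset of $Y_l$ and $B$ the $p$ vertices of $B_l$ with fewest neighbours in $A$; averaging gives
\[
e_F(A,B)\ \le\ \frac{p}{|B_l|}\,e_F(A,B_l)\ \le\ \frac{p}{|B_l|}\cdot p\cdot\frac{\sigma}{32}\ \le\ \frac{p^2}{32}.
\]
Since $32^{-(k-2)}\sigma\le\sigma/(k-1)$ for every $k\ge 3$, both cases deliver a pair of the claimed size, completing the induction.

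Applying the claim with $k=r$ and $\sigma=2^{-5r}n$ produces a pair of size at least $32^{-(r-2)}\cdot 2^{-5r}n=2^{10}\cdot 2^{-10r}n\ge 2^{-10r}n$, which is precisely what is needed. The one point requiring care is the constant-bookkeeping: each descent in the induction costs a factor $32$ in the guaranteed block size, but we start a full factor $2^{5r}$ above the target ($\sigma=2^{-5r}n$ versus $2^{-10r}n$), so the accumulated loss $32^{r-2}=2^{5r-10}$ is comfortably affordable, and the pigeonhole loss of $k-1\le r$ in Case~B is negligible against it. Beyond this accounting, and the routine checks that an optimal partition survives moving $v$ and that $K_r$-freeness of $G[N(v)]$ is invoked correctly, no new idea beyond the argument of~\cite{BBS1} is needed.
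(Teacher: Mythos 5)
Your argument is correct and follows the same overall strategy as the paper: take the optimal partition witnessing internal density, use a vertex $v$ of large degree inside its own part, note that optimality of the partition forces $|N(v)\cap U_j|\ge |N(v)\cap U_i|>2^{-5r}n$ for every $j$, and then argue inside the $K_r$-free graph $G[N(v)]$. The only difference is that where the paper invokes the embedding lemma of Alon, Balogh, Keevash and Sudakov (Lemma~\ref{lem:embed}, applied with $\alpha=1/32$) as a black box to produce a $K_r$ in $N(v)$ and hence a contradiction with $\Kr$-freeness, you prove the contrapositive of the required special case from scratch: a self-contained induction showing that a $K_k$-free graph with $k$ blocks of size $\ge\sigma$ must contain a sparse cross-pair of size $\ge 32^{-(k-2)}\sigma$. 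Your induction (the dichotomy between a vertex with dense neighbourhoods in all other blocks versus a block with many low-degree vertices towards some other block, followed by the averaging step selecting the $p$ lowest-degree vertices) is exactly the standard proof of that embedding lemma, and your constant bookkeeping is right: $32^{-(r-2)}\cdot 2^{-5r}n = 2^{10-10r}n\ge 2^{-10r}n$, so the sparse pair you produce does violate the uniform density condition at the threshold $2^{-10r}n$. The net effect is a proof that is logically equivalent but self-contained, at the cost of reproducing the ABKS argument; nothing is gained or lost mathematically.
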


We will prove Lemma~\ref{lem:struc:maxdeg} using the following embedding lemma\footnote{In fact, the version stated here is slightly more general than~\cite[Lemma~3.1]{ABKS}, but follows from exactly the same proof.} from~\cite{ABKS}. 

\begin{lemma}\label{lem:embed}
Let $0 < \alpha < 1$, $G$ be a graph, and $W_1,\ldots,W_r \subset V(G)$ be disjoint sets of vertices. Suppose that for every pair $\{i,j\} \subset [r]$ and every pair of sets $A \subset W_i$ and $B \subset W_j$ with $|A| \ge \alpha^r |W_i|$ and $|B| \ge \alpha^r |W_j|$, we have $e(A,B) > \alpha |A||B|$. 

Then $G$ contains a copy of $K_r$ with one vertex in each set $W_j$.  
\end{lemma}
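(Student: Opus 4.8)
The plan is to prove Lemma~\ref{lem:embed} by induction on $r$, greedily selecting one vertex of the transversal clique and recursing on its neighbourhood. The case $r = 1$ is trivial: choose any vertex of $W_1$, which is nonempty --- note that when $r \ge 2$ the hypothesis applied to $A = W_i$, $B = W_j$ forces every $W_i$ to be nonempty, and this will be preserved under the recursion. So assume $r \ge 2$ and that the lemma holds for $r - 1$.

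The key step is to find a vertex $v_1 \in W_1$ with $|N(v_1) \cap W_j| \ge \alpha |W_j|$ for every $j \in \{2, \dots, r\}$. For each such $j$, let $B_j = \{ v \in W_1 : |N(v) \cap W_j| < \alpha|W_j| \}$ be the set of ``bad'' vertices. If $|B_j| \ge \alpha^r |W_1|$, then, since also $|W_j| \ge \alpha^r |W_j|$, we may apply the density hypothesis to the pair $(B_j, W_j)$ to obtain $e(B_j, W_j) > \alpha |B_j| |W_j|$; but the definition of $B_j$ gives $e(B_j, W_j) = \sum_{v \in B_j} |N(v) \cap W_j| < \alpha |B_j| |W_j|$, a contradiction. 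Hence $|B_j| < \alpha^r |W_1|$ for each $j$, and a union bound yields $\big|\bigcup_{j=2}^r B_j\big| < (r-1)\alpha^r |W_1| \le |W_1|$, so a suitable $v_1 \in W_1 \setminus \bigcup_{j=2}^r B_j$ exists.

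I would then set $W_j' = N(v_1) \cap W_j$ for $j = 2, \dots, r$; by the choice of $v_1$ we have $|W_j'| \ge \alpha |W_j| > 0$. The crucial observation is that $W_2', \dots, W_r'$ satisfy the hypothesis of the lemma with $r$ replaced by $r - 1$ (and the same $\alpha$): if $A \subset W_i'$ and $B \subset W_j'$ with $|A| \ge \alpha^{r-1}|W_i'|$ and $|B| \ge \alpha^{r-1}|W_j'|$, then $|A| \ge \alpha^{r-1} \cdot \alpha |W_i| = \alpha^r |W_i|$ and likewise $|B| \ge \alpha^r |W_j|$, so the original hypothesis applies and gives $e(A,B) > \alpha |A| |B|$. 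By the induction hypothesis, $G$ contains a copy of $K_{r-1}$ with one vertex in each $W_j'$; since every vertex of every $W_j'$ lies in $N(v_1)$, adding $v_1$ produces a copy of $K_r$ with one vertex in each $W_j$ (these $r$ vertices are distinct because the $W_j$ are disjoint and $v_1 \in W_1$).

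The only delicate point is the existence of $v_1$, which requires the inequality $(r-1)\alpha^r < 1$. This is comfortably satisfied in every regime we need --- in particular for $\alpha = 1/32$, and more generally whenever $\alpha \le 1/2$, since then $(r-1)\alpha^r \le (r-1)2^{-r} < 1$. Indeed, the exponent $\alpha^r$ in the statement is calibrated precisely so that this union bound goes through \emph{and} the recursive step works (the latter needing exactly $\alpha^{r-1} \cdot \alpha = \alpha^r$); everything else is routine bookkeeping.
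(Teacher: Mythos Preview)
The paper does not give its own proof of this lemma; it simply cites \cite[Lemma~3.1]{ABKS} and remarks that the version stated ``follows from exactly the same proof''. Your argument --- greedy selection of a vertex with large degree into every remaining part, followed by recursion on the restricted neighbourhoods --- is precisely that standard proof, and it is correctly executed. The observation that $|W_j'| \ge \alpha|W_j|$ converts the threshold $\alpha^{r-1}|W_j'|$ at level $r-1$ back into the threshold $\alpha^r|W_j|$ at level $r$ is exactly the point of the exponent in the hypothesis, as you note.

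You are right to flag the constraint $(r-1)\alpha^r < 1$: the union bound genuinely needs it, and indeed the full induction needs $(m-1)\alpha^m < 1$ for every $2 \le m \le r$, since the same issue recurs at each level. For $\alpha \le 1/2$ one has $(m-1)\alpha^m \le (m-1)2^{-m} \le 1/4$ for all $m \ge 2$, so the whole induction goes through; in particular everything is fine for $\alpha = 1/32$, the only value used in the paper (Lemma~\ref{lem:struc:maxdeg}). The lemma as literally stated for arbitrary $0 < \alpha < 1$ is slightly stronger than what this argument yields, but this is a harmless imprecision in the paper's statement rather than a defect in your proof.
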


\begin{proof}[Proof of Lemma~\ref{lem:struc:maxdeg}]
Suppose for a contradiction that $G \in \G$ is both internally dense and uniformly dense. Let $\U = (U_1,\ldots,U_r)$ be the optimal partition given by Definition~\ref{def:internal}, and suppose that $v \in U_1$ has degree at least $2^{-5r} n$ in $G[U_1]$. For each $i \in [r]$, let $W_i = N(v) \cap U_i$, and observe that $|W_i| \ge 2^{-5r} n$, since $\U$ is optimal.

Observe that $W_1,\ldots,W_r$ satisfy the conditions of Lemma~\ref{lem:embed} with $\alpha = 1/32$, since $G$ is uniformly dense, so $e(A,B) > |A||B| / 32$ for every pair $\{i,j\} \subset [r]$, and every $A \subset U_i$ and $B\subset U_j$ with $|A| = |B| \ge 2^{-10r} n$. Thus, by Lemma~\ref{lem:embed}, there exists a copy of $K_r$ in the neighborhood of $v$, which (including $v$) gives a copy of $\Kr$ in $G$. But this is a contradiction, since our graph is $\Kr$-free, and so every internally dense graph $G \in \G$ is not uniformly dense, as claimed. 
\end{proof}

Our final definition controls the sizes of the parts in an optimal partition.

\begin{defn}[Balanced graphs] 
A graph $G$ is said to be \emph{balanced} if, for every optimal partition $\U = (U_1,\ldots,U_r)$ of $G$, we have
\begin{equation}\label{eq:well}
\frac{n}{r} - 2^{-5r} n  \, \le \, |U_i| \, \le \, \frac{n}{r} + 2^{-5r} n
\end{equation}
for every $1 \le i \le r$. Otherwise we say that $G$ is \emph{unbalanced}. 
\end{defn}


\begin{lemma}\label{lem:struc:well}
The number of unbalanced graphs in $\G$ is at most
$$2^{t_r(n) - 2^{-12r} n^2},$$ 
and therefore almost all $\Kr$-free graphs are balanced.
\end{lemma}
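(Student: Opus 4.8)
The plan is to count unbalanced graphs in $\G$ by a weight-shifting argument that mirrors the proof of Lemma~\ref{lem:struc:reg}. Suppose $G \in \G$ is unbalanced, and fix an optimal partition $\U = (U_1,\ldots,U_r)$ witnessing this, so that some part, say $U_1$, satisfies $\big||U_1| - n/r\big| > 2^{-5r} n$. We encode $G$ by first specifying the partition $\U$ (at most $r^n$ choices), then the vertex $i=1$ and the part sizes, then the interior edges $\sum_i e(U_i)$, and finally the edges between the parts. By the same argument as in Lemma~\ref{lem:struc:reg}, since $G$ is $n^{2-1/r^2}$-close to being $r$-partite and $\U$ is optimal, there are at most $2^{t_r(n) - e(\U) + o(n^2)}$ ways to choose all the edges once the sizes of the parts are fixed, where $e(\U) = \sum_{i<j} |U_i||U_j|$ is the number of crossing pairs and we write the interior-edge term into the $o(n^2)$ error. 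The point is that $e(\U)$ is strictly smaller than $t_r(n) = e\big(T_r(n)\big)$ when the partition is unbalanced.

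The main computation is therefore the following deficiency estimate: if $(|U_1|,\ldots,|U_r|)$ is an $r$-tuple of nonnegative integers summing to $n$ with $\big||U_1| - n/r\big| \ge 2^{-5r}n$, then
\begin{equation*}
\sum_{i<j} |U_i||U_j| \, \le \, t_r(n) - 2^{-11r} n^2.
\end{equation*}
This follows from the fact that $\sum_{i<j} x_i x_j = \tfrac12\big(n^2 - \sum_i x_i^2\big)$ is a concave function maximized at the balanced point $x_i = n/r$, together with a second-order (Taylor / strict concavity) bound: moving a single coordinate a distance $\delta n$ away from $n/r$ decreases the sum by at least an amount of order $\delta^2 n^2$, here at least $(2^{-5r}n)^2 = 2^{-10r}n^2$, and we lose only a factor of $2$ in passing from the real relaxation to integer tuples and in absorbing lower-order terms. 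Thus $t_r(n) - e(\U) \ge 2^{-11r} n^2$, comfortably.

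Putting this together, the number of unbalanced graphs in $\G$ is at most
\begin{equation*}
r^n \cdot r \cdot n^r \cdot 2^{t_r(n) - e(\U)} \cdot \exp\big( n^{2-1/r^2}\log n \big) \, \le \, 2^{t_r(n) - 2^{-11r}n^2 + o(n^2)} \, \le \, 2^{t_r(n) - 2^{-12r}n^2},
\end{equation*}
where the factors $r^n, r, n^r$ count the partition, the index, and the part sizes respectively, and the $\exp(n^{2-1/r^2}\log n)$ term accounts for the interior edges (at most $n^{2-1/r^2}$ of them, since $G$ is $n^{2-1/r^2}$-close to being $r$-partite). Since $2^{-12r}n^2 \gg 0$ while the total number of $\Kr$-free graphs is at least $2^{t_r(n)}$, it follows that almost all $\Kr$-free graphs are balanced. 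The only genuinely delicate point is making the concavity deficiency estimate clean enough that the exponent $2^{-12r}n^2$ survives all the entropy overhead ($r^n = 2^{n\log r} = 2^{o(n^2)}$ and the $\exp(n^{2-1/r^2}\log n) = 2^{o(n^2)}$ terms), which it does with room to spare given the bound $r \le (\log n)^{1/4}$.
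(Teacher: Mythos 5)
Your argument is correct and is essentially the paper's own proof: fix the optimal partition (at most $r^n$ choices), show via the concavity/vertex-shifting deficiency estimate that an unbalanced partition has at most $t_r(n) - 2^{-11r} n^2$ crossing pairs, and absorb the $\binom{n^2}{n^{2-1/r^2}}$ choices of interior edges together with the remaining $2^{o(n^2)}$ entropy overhead, which is negligible since $r \le (\log n)^{1/4}$. One notational slip: the number of ways to choose the crossing edges is $2^{e(\U)} \le 2^{t_r(n) - 2^{-11r} n^2}$, not $2^{t_r(n) - e(\U)}$ (as written, that inequality points the wrong way), but the surrounding text and your deficiency bound make the intended estimate clear.
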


\begin{proof}
Let $G \in \G$ be an unbalanced graph, and let  $\U = (U_1,\ldots,U_r)$ be an optimal partition of $G$ for which~\eqref{eq:well} fails. Note that
$$\sum_{i = 1}^{r-1} \sum_{j = i + 1}^r |U_i||U_j| \, \le \, t_r(n) - 2^{-11r} n^2,$$
since moving a vertex from a set of size at least $n/r + a$ to one of size $n/r - b$ creates at least $a+b-1$ new potential cross edges. The number of such graphs $G \in \G$ is therefore at most
$$r^n \cdot 2^{t_r(n) - 2^{-11r} n^2} \cdot \binom{n^2}{n^{2-1/r^2}} \, \le \, 2^{t_r(n) - 2^{-12r} n^2},$$
as claimed.
\end{proof}

\section{The proof of Theorem~\ref{thm:main}}\label{proofsec}

In this section we will deduce Theorem~\ref{thm:main} from Theorem~\ref{thm:structure:approx}, using the method of Balogh, Bollob\'as and Simonovits~\cite{BBS1,BBS2}. Recall from the previous section that almost all $\Kr$-free graphs are uniformly dense, internally sparse and balanced. 

Let us fix throughout this section a function $2 \le r = r(n) \le (\log n)^{1/4}$, and assume that $n$ is sufficiently large. 

\begin{defn}
Let $\Q(n,r)$ denote the collection of $\Kr$-free graphs on $n$ vertices that are not $r$-partite, but are $n^{2-1/r^2}$-close to being $r$-partite, and are moreover uniformly dense, internally sparse and balanced.
\end{defn}

Let $\K(n,r)$ denote the collection of $\Kr$-free graphs on $n$ vertices. We will prove the following proposition, which completes the proof of Theorem~\ref{thm:main}.

\begin{prop}\label{prop}
For every sufficiently large $n \in \N$,
$$|\Q(n,r)| \, \le \, 2^{-2^{-6r} n} \cdot |\K(n,r)|.$$
\end{prop}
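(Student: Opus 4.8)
The plan is to inject each graph $G \in \Q(n,r)$ into the set $\K(n,r)$ via a map that adds many edges, and then show that each target $\Kr$-free graph has few preimages. Fix $G \in \Q(n,r)$ together with an optimal $r$-partition $\U = (U_1,\ldots,U_r)$. Since $G$ is not $r$-partite, it has at least one interior edge; since $G$ is uniformly dense, internally sparse and balanced, every interior edge $xy$ (say $x,y \in U_1$) is, in a precise sense, highly atypical. Indeed, I would first show that for such an edge there is a large set $S \subset V(G)$, of size $\Omega(2^{-O(r)} n)$, consisting of vertices $w$ in some part $U_j$ ($j \ne 1$) that are non-adjacent to both $x$ and $y$: if no such set existed, then $N(x) \cap N(y)$ would meet all but a bounded number of vertices in each of $U_2,\dots,U_r$, and (using uniform density to verify the hypotheses of Lemma~\ref{lem:embed} inside those neighbourhoods) we could embed a $K_{r-1}$ in $N(x)\cap N(y)$, producing a $\Kr$ with $x$ and $y$ — contradicting $\Kr$-freeness. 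Internal sparsity and balancedness are what guarantee the $W_i$'s are large enough for Lemma~\ref{lem:embed} to apply.

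Next, the injection. Given $G$ with its distinguished interior edge $xy$ and a canonically chosen such set $S = S(G, xy)$, form a new graph $\varphi(G)$ by deleting the edge $xy$ and adding all edges from $x$ to $S$ and from $y$ to $S$ (one could also delete the edges inside $S$ if one wants $\varphi(G)$ to be exactly $\Kr$-free, but it suffices to argue it is $\Kr$-free directly: any new $\Kr$ would have to use $x$ or $y$ together with a vertex of $S$, but $x$ and $y$ are now non-adjacent and $S$ was chosen non-adjacent to $\{x,y\}$). Thus $\varphi(G) \in \K(n,r)$ and $e(\varphi(G)) = e(G) - 1 + 2|S| \ge e(G) + 2^{-O(r)} n$. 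To recover $G$ from $\varphi(G)$ one needs to specify: the pair $\{x,y\}$ (at most $n^2$ choices), and the set $S$ (at most $2^{|S|}$ choices of which of the $\le 2\cdot 2^{-5r}n$... — here I would instead bound the number of preimages by noting $S \subset N(x) \cap N(y)$ in $\varphi(G)$, so $S$ is one of at most $2^{d}$ subsets where $d = |N(x)\cap N(y)|$, and on the image side this is still at most $2^{n}$; more carefully, since $|S| = \Theta(2^{-cr}n)$ one can afford a $\binom{n}{|S|} \le 2^{O(2^{-cr} n \log n)}$ count). The point is that the number of preimages of any fixed $H \in \K(n,r)$ is at most $2^{o(2^{-6r} n)}$ — subexponential compared with the $2^{2^{-O(r)} n}$ gain — provided the exponents are arranged so the edge-gain dominates; one then double-counts: $|\Q(n,r)| \le (\text{max preimages}) \cdot |\K(n,r)| / 2^{(\text{something})}$, but actually the clean way is to observe that the images $\varphi(G)$ all lie in $\K(n,r)$, hence
$$|\K(n,r)| \ \ge\ \sum_{H} \mathbf{1}[H \in \mathrm{Im}(\varphi)] \ \ge\ \frac{|\Q(n,r)|}{\max_H |\varphi^{-1}(H)|}.$$
This is not quite enough by itself (it only gives $|\Q| \le 2^{o(n)}|\K|$, not the required factor $2^{-2^{-6r}n}$); so I would refine it, as in Balogh--Bollob\'as--Simonovits, by instead comparing $\Q(n,r)$ against the subfamily of $\K(n,r)$ consisting of graphs with many edges. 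Concretely: every $H$ in the image satisfies $e(H) \ge e(G) + 2^{-O(r)} n$, and separately one shows $|\{H \in \K(n,r) : e(H) \ge t_r(n) + m\}| \le 2^{-m/2}|\K(n,r)|$ or similar by a removal argument, so that the $\Omega(2^{-O(r)}n)$ extra edges of $\varphi(G)$ already force $\varphi(G)$ into an exponentially small sub-population; combining with the subexponential preimage bound yields $|\Q(n,r)| \le 2^{-2^{-6r}n}|\K(n,r)|$.

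The main obstacle is making the book-keeping on the exponents tight enough: the set $S$ has size only $2^{-\Theta(r)} n$, the number of choices for $S$ as a subset contributes $2^{O(2^{-\Theta(r)} n \log n)}$ to the preimage count, and this $\log n$ factor must be absorbed — this is exactly where $r \le (\log n)^{1/4}$, or rather the milder bound $2^{r} \ll \log n$, is used, together with the fact that $G$ is $n^{2-1/r^2}$-close to $r$-partite (so the number of interior edges that could be touched is also subexponentially controlled). A secondary technical point is choosing $S$ canonically (e.g. lexicographically largest eligible set of a fixed size) so that $\varphi$ is genuinely well-defined and the inverse analysis is valid, and checking carefully that adding the $S$-edges cannot create a $\Kr$ using two vertices of $S$ — which is why it is cleanest to also delete all edges within $S$ and record that $S$ was independent (at most $2^{\binom{|S|}{2}}$ further choices, again subexponential in the relevant scale).
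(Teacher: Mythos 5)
Your proposal follows the right general philosophy (a Balogh--Bollob\'as--Simonovits-style comparison between $\Q(n,r)$ and $\K(n,r)$ driven by an interior edge), but both of its key steps have genuine gaps. First, the structural claim is not justified: from the non-existence of a large set of common \emph{non}-neighbours of $x$ and $y$ in a part $U_j$ you cannot conclude that $N(x)\cap N(y)$ is large in that part, because $N(x)$ and $N(y)$ may essentially partition $U_j$ between them (say, $x$ joined to one half of $U_j$ and $y$ to the other), in which case both the common neighbourhood and the common non-neighbourhood are tiny. This configuration is compatible with $G$ being uniformly dense, internally sparse and balanced, so the set $S$ on which you build the injection need not exist. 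What $\Kr$-freeness plus Lemma~\ref{lem:embed} actually yields is the existence of a \emph{bad $r$-set} (an $r$-set with no common neighbour in some $U_j$; see Definition~\ref{def:mG} and Lemma~\ref{lem:mG:lbound}) --- a weaker piece of structure, but one that is specifiable in only $O(r\log n)$ bits.

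Second, and more fundamentally, the entropy bookkeeping of your injection does not close. Recovering $G$ from $\varphi(G)$ requires specifying $S$, and a set of size $k=2^{-\Theta(r)}n$ costs $\log_2\binom{n}{k}=\Theta(rk)$ bits to specify, while adding the $2k$ edges gains only $2k$ bits; since the implicit constant times $r$ exceeds $2$, this is a net \emph{loss}, and the hypothesis $r\le(\log n)^{1/4}$ does not help (the competition is between $rk$ and $2k$, not between anything and $\log n$). Your fallback also fails: the set $\{H\in\K(n,r) : e(H)\ge t_r(n)+m\}$ is empty by Tur\'an's theorem, and the meaningful version --- that $\Kr$-free graphs with $\Omega(2^{-O(r)}n)$ more edges than typical are exponentially rare --- is false, since shifting by $O(n)$ edges within a binomial-type distribution over $\Theta(n^2)$ edge-slots changes the count by a $1+o(1)$ factor only; one would need a surplus of order $n^{3/2}$ for an exponential-in-$n$ penalty. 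The paper sidesteps both problems by using a multivalued map $\Phi_m$ that deletes and re-randomizes \emph{all} $\Theta(mnr)$ edges at the bad sets $X(G)$ (Definition~\ref{def:PhimHm}): the gain then comes from the badness constraint itself, which restricts the adjacency of each bad $r$-set to $U_j$ to $(2^r-1)^{|U_j|}$ rather than $2^{r|U_j|}$ patterns --- a saving of $2^{\Theta(n/2^r)}$ per bad set that dominates the $n^{O(r)}$ cost of specifying $X(G)$ (Lemmas~\ref{lem:outdegrees} and~\ref{lem:indegrees}). Without some analogue of that mechanism, a single-edge injection cannot produce the required factor $2^{-2^{-6r}n}$.
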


The idea of the proof is as follows. We will define a collection of bipartite graphs $F_m$ (see Definition~\ref{def:PhimHm}) with parts $\Q(n,r,m)$ and $\K(n,r)$, where the sets $\Q(n,r,m)$ form a partition of $\Q(n,r)$ (see Definitions~\ref{def:mG} and~\ref{def:Qnrtm}). These bipartite graphs will have the following property: the degree in $F_m$ of each $G \in \Q(n,r,m)$ will be significantly larger than the degree of each $G \in \K(n,r)$ (see Lemmas~\ref{lem:outdegrees} and~\ref{lem:indegrees}). The result will then follow by double counting the edges of each $F_m$ and summing over $m$.

In order to define $\Q(n,r,m)$ and $F_m$, we will need the following simple concept.

\begin{defn}[Bad sets] 
Let $G$ be a graph and let $U \subset V(G)$. A set of $r$ vertices $R \subset V(G) \setminus U$ is said to be \emph{bad} towards $U$ if it has no common neighbor in $U$. 
\end{defn}

In the following definition we may choose the partition $\U$ and the sets $X^{(1)},\ldots,X^{(r)}$ arbitrarily, subject to the given conditions.  

\begin{defn}\label{def:mG}
For each $G \in \Q(n,r)$, fix an optimal partition $\U = (U_1,\ldots,U_r)$ of $V(G)$, and for each $j \in [r]$ choose a maximal collection of vertex-disjoint sets $X^{(j)} = \big\{ R^{(j)}_1,\ldots,R^{(j)}_{\ell(j)} \big\}$ such that $R^{(j)}_i$ is bad towards $U_j$ for each $i \in [\ell(j)]$. We define
$$m(G) \, := \, \max\big\{ \ell(j) \,:\, j \in [r] \big\},$$
let $j(G)$ denote the smallest $j$ for which this maximum is attained, and set 
$$X(G) \, := \, R^{\left(j\left(G\right)\right)}_1 \cup \cdots \cup R^{\left(j\left(G\right)\right)}_{\ell\left(j\left(G\right)\right)}.$$
\end{defn}

With this definition in place, it is natural to partition $\Q(n,r)$ by the size of $m(G)$.

\begin{defn}\label{def:Qnrtm}
 For each $m\in\NN$, we define
$$\Q(n,r,m) \,=\, \big\{ G \in \Q(n,r) \,:\, m(G) = m \big\}.$$
\end{defn}

Before continuing, let us note a simple but key fact.

\begin{lemma}\label{lem:mG:lbound}
$m(G) \ge 1$ for every $G \in \Q(n,r)$.
\end{lemma}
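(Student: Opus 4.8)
The claim to prove is that $m(G) \geq 1$ for every $G \in \mathcal{Q}(n,r)$; in other words, for some part $U_j$ of the optimal partition, there is at least one set $R$ of $r$ vertices (lying outside $U_j$) with no common neighbour in $U_j$. Equivalently, $m(G) = 0$ would mean that for every $j$, every $r$-subset of $V(G) \setminus U_j$ has a common neighbour in $U_j$. The plan is to show this strong property forces a copy of $\Kr$, contradicting that $G$ is $\Kr$-free.

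Here is how I would argue. Suppose for contradiction that $m(G) = 0$. Fix the optimal partition $\U = (U_1,\dots,U_r)$. Since $G$ is not $r$-partite (part of the definition of $\Q(n,r)$), some part, say $U_1$, contains an edge $xy$ with $x,y \in U_1$. Now I want to greedily build a $\Kr$ using one vertex from each part. Start with $x \in U_1$. The key observation is: if I have already selected a clique $\{v_1,\dots,v_s\}$ with $v_i$ living in distinct parts among $U_2,\dots,U_r$ (and also want it to be in the common neighbourhood of $x$), I need to extend into the next part. The condition $m(G)=0$ says that any $r$ vertices outside a given part $U_j$ have a common neighbour in $U_j$; applying this with a cleverly chosen $r$-set should let me find the next vertex. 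Actually, the cleanest route: because $G$ is uniformly dense and internally sparse and balanced, the neighbourhood structure is very rich, and combined with $m(G)=0$ one pins down a $K_r$ in $N(x)$ which, together with $x$, gives $\Kr$.

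Let me think about which mechanism is really intended. I suspect the argument is: since $U_1$ contains an edge $xy$, consider $x$; we will find a $K_{r-1}$ inside $N(x) \cap (U_2 \cup \dots \cup U_r)$ one vertex per part, plus we already have $y \in U_1 \cap N(x)$ — no wait, that gives $K_{r+1}$ only if $y$ is joined to all of them too. Better: we find a common neighbour structure. Actually the natural statement is that $m(G)=0$ implies that for every part $U_j$ and every $r$-set $R$ outside $U_j$, $R$ has a common neighbour in $U_j$; iterating this one part at a time lets us build $\Kr$. Concretely: pick any vertex $u_1 \in U_1$. Since $U_1$ has an edge, actually just pick $r$ vertices, one per part, forming a clique, as follows. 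Having chosen $u_1,\dots,u_k$ forming a clique with $u_i \in U_i$, look at the $r$-set consisting of $u_1,\dots,u_k$ together with $r-k$ further vertices chosen from parts $U_{k+2},\dots$ (or reuse — we need exactly $r$ vertices outside $U_{k+1}$); by $m(G)=0$ this $r$-set has a common neighbour $u_{k+1} \in U_{k+1}$, which in particular is adjacent to all of $u_1,\dots,u_k$. Proceeding gives a clique $u_1,\dots,u_r$ with one vertex per part; that is only $K_r$. To get $\Kr$, use that $U_1$ contains an edge: choose the $r$-set to avoid $U_1$ and include both endpoints-of-an-edge logic — precisely, build the $K_r$ so that two of its vertices can be swapped for the edge in $U_1$. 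The hard part, and the main obstacle, is arranging the bookkeeping so that the greedily-built clique actually closes up to $K_{r+1}$ rather than merely $K_r$: one must carefully exploit that $G$ is \emph{not} $r$-partite (the edge inside a part) to supply the extra vertex, and keep track of which $r$-set to feed into the $m(G)=0$ hypothesis at each step so that the chosen common neighbour extends the correct clique. I would write:

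\begin{proof}[Proof of Lemma~\ref{lem:mG:lbound}]
Suppose, for a contradiction, that $m(G) = 0$ for some $G \in \Q(n,r)$, and fix an optimal partition $\U = (U_1,\ldots,U_r)$ of $V(G)$ as in Definition~\ref{def:mG}. By the definition of $m(G)$, the maximal collection $X^{(j)}$ is empty for every $j \in [r]$; that is, \emph{every} set of $r$ vertices $R \subset V(G) \setminus U_j$ has a common neighbour in $U_j$.

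Since $G \in \Q(n,r)$ is not $r$-partite, some part, without loss of generality $U_1$, contains an edge $xy$ with $x,y \in U_1$. We now build a clique greedily. Set $u_1 := x$ and $u_1' := y$, and suppose that for some $1 \le k < r$ we have found vertices $u_1, u_2, \ldots, u_k$ and $u_1', u_2, \ldots, u_k$ which span two copies of $K_k$, where $u_i \in U_i$ for each $i \in [k]$ and $u_1' \in U_1$, and moreover $u_1 u_1' \in E(G)$. (For $k = 1$ this holds with the edge $xy$.) We wish to choose $u_{k+1} \in U_{k+1}$ adjacent to each of $u_1, u_1', u_2, \ldots, u_k$. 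If $k + 1 \le r - 1$, let $R$ be any set of $r$ vertices with
$$\{u_1, u_1', u_2, \ldots, u_k\} \subseteq R \subseteq V(G) \setminus U_{k+1},$$
which is possible since $|\{u_1, u_1', u_2, \ldots, u_k\}| = k + 1 \le r$ and $V(G) \setminus U_{k+1}$ has at least $n - |U_{k+1}| \ge r$ vertices (as $G$ is balanced and $n$ is large). If $k + 1 = r$, then $\{u_1, u_1', u_2, \ldots, u_{r-1}\}$ already has exactly $r$ vertices, all outside $U_r$, so we take $R$ to be this set. In either case, since $m(G) = 0$, the set $R$ has a common neighbour $u_{k+1} \in U_{k+1}$, and in particular $u_{k+1}$ is adjacent to all of $u_1, u_1', u_2, \ldots, u_k$. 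Then $u_1, \ldots, u_{k+1}$ span a $K_{k+1}$, as do $u_1', u_2, \ldots, u_{k+1}$, and $u_1 u_1' \in E(G)$, so the inductive hypothesis is maintained.

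After $r - 1$ steps we obtain vertices $u_1, u_1', u_2, \ldots, u_r$ with $u_i \in U_i$ for $i \in [r]$ and $u_1' \in U_1$, such that $\{u_1, u_2, \ldots, u_r\}$ and $\{u_1', u_2, \ldots, u_r\}$ each span a $K_r$, and $u_1 u_1' \in E(G)$. Hence $\{u_1, u_1', u_2, u_3, \ldots, u_r\}$ spans a copy of $\Kr$ in $G$. This contradicts the fact that $G$ is $\Kr$-free, and so $m(G) \ge 1$ for every $G \in \Q(n,r)$, as claimed.
\end{proof}
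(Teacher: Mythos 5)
Your proof is correct and takes essentially the same approach as the paper: starting from an interior edge of the optimal partition (which exists since $G$ is not $r$-partite) and greedily extending it one part at a time, using the absence of bad $r$-sets to find a common neighbour at each step, until a copy of $\Kr$ is forced. The paper's version is more terse, while yours spells out the bookkeeping (padding the clique to an $r$-set at intermediate steps) that the paper leaves implicit.
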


\begin{proof}
This follows from the fact that $G$ is not $r$-partite. Indeed, suppose that $m(G) = 0$ and let $x_0x_1 \in E(G[U_1])$ be an `interior' edge of $G$ with respect to $\U$. Since there are no bad $r$-sets towards $U_j$ for any $j \in [r]$, we can recursively choose vertices $x_j \in U_j$ such that $\{x_0,\ldots,x_j\}$ forms a clique. But this is a contradiction, since $G$ is $\Kr$-free. 
\end{proof}

In order to establish an upper bound on those $m$ which we need to consider, we count those graphs in $\Q(n,r)$ for which $m(G)$ is large.

\begin{lemma}\label{lem:mG:ubound}
If $m \ge 2^{-8r} n$, then
$$|\Q(n,r,m)| \, \le \, 2^{t_r(n) - mn / 2^{3r}}.$$
\end{lemma}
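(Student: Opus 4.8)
The plan is to use the $m$ disjoint bad $r$-sets to show that $G$ has substantially fewer edges than the Tur\'an graph, and then to bound the number of such $G$ by an encoding in which this edge deficit is \emph{localised}. Fix, for each $G\in\Q(n,r,m)$, the optimal partition $\U=(U_1,\dots,U_r)$, the index $j=j(G)$ and the vertex-disjoint bad $r$-sets $R_1,\dots,R_m$ towards $U_j$ supplied by Definition~\ref{def:mG}, and write $W=R_1\cup\dots\cup R_m$, so $|W|=mr$ and $W\subset V(G)\setminus U_j$. Since no vertex of $U_j$ is adjacent to all of $R_i$, each $u\in U_j$ has a non-neighbour in $R_i$, so $e(R_i,U_j)\le(r-1)|U_j|$, i.e.\ at least $|U_j|$ of the $r|U_j|$ pairs between $R_i$ and $U_j$ are non-edges; as the $R_i$ are disjoint, $G$ misses at least $m|U_j|$ cross-edges, all inside the block $W\times U_j$. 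As $G$ is balanced, $|U_j|\ge n/r-2^{-5r}n\ge n/(2r)$; as $\U$ is optimal and $G$ is $n^{2-1/r^2}$-close to being $r$-partite, $\sum_i e_G(U_i)\le n^{2-1/r^2}$; and $\sum_{i<i'}|U_i||U_{i'}|\le t_r(n)$. Hence
\[
e(G)\ \le\ t_r(n)-\frac{mn}{2r}+n^{2-1/r^2}\ \le\ t_r(n)-\frac{mn}{3r}
\]
for all large $n$, using $m\ge 2^{-8r}n$ and $r\le(\log n)^{1/4}$.

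For the count I would encode $G$ by: the partition $\U$ (at most $r^n$ choices), the index $j$ (at most $r$), the set $W$ (at most $\binom{n}{mr}\le 2^n$), the interior edges of $G$ with respect to $\U$ (at most $\exp(2n^{2-1/r^2}\log n)$, as in the proof of Lemma~\ref{lem:struc:reg}), the cross-edges of $G$ lying outside $W\times U_j$ (freely: at most $2^{\,t_r(n)-|W||U_j|}$, since there are at most $t_r(n)$ cross-pairs in all), and the restriction of $G$ to the block $W\times U_j$. As $r\le(\log n)^{1/4}$ forces $2^{11r}\log n\ll n^{1/r^2}$, every auxiliary cost above is $2^{o(mn/2^{3r})}$, so it is enough to cut the number of admissible bipartite graphs on $W\times U_j$ down to at most $2^{\,|W||U_j|-mn/4^r}$. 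Towards this, single out in each $R_i$ a vertex with at least $|U_j|/r$ non-neighbours in $U_j$; by pigeonhole at least $m/(r-1)\ge 2^{-10r}n$ of these vertices lie in one part $U_k$, giving a set $Z\subset U_k$ with $|Z|\ge 2^{-10r}n$ and $e(Z,U_j)\le e(W,U_j)\le m(r-1)|U_j|$, and then feed this deficit into the hypothesis that $G$ is uniformly dense — just as uniform density is exploited, via the embedding Lemma~\ref{lem:embed}, in the proof of Lemma~\ref{lem:struc:maxdeg} — to force either a forbidden copy of $\Kr$ or enough rigidity to make the block's edge-set exponentially constrained.

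The hard part is exactly this last step. The bare bound ``$W$ sends at most $m(r-1)|U_j|$ edges to $U_j$'' is by itself worthless, since $(r-1)/r\ge 1/2$ and so all but an exponentially small fraction of the $2^{|W||U_j|}$ bipartite graphs on $W\times U_j$ obey it; similarly, knowing only $e(G)\le t_r(n)-mn/(3r)$ cannot bound $|\Q(n,r,m)|$, as $t_r(n)-mn/(3r)>t_r(n)/2$. The exponential saving has to be extracted from the interaction of the many disjoint bad $r$-sets with the structural hypotheses — uniform density, internal sparsity, $\Kr$-freeness, balance — by showing that such a configuration with $m\ge 2^{-8r}n$ cannot occur outside an exponentially small family of graphs (for instance by locating a large nearly-empty bipartite block, which flatly contradicts uniform density when $m$ is large and otherwise confines $E(G)$ to a genuinely sparse sub-pattern). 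Once that quantitative incompatibility is established, the remainder is bookkeeping of the same type as in Lemmas~\ref{lem:struc:reg} and~\ref{lem:struc:well}.
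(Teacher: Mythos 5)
There is a genuine gap, and you have honestly flagged it: the entire exponential saving is concentrated in the step you leave open, and the route you sketch for it (uniform density plus the embedding lemma) is not how it is done and is not needed. The idea you are missing is that the badness condition should not be aggregated into an edge count at all. You correctly observe that each $u\in U_j$ has a non-neighbour in each $R_i$, but you then immediately weaken this to $e(R_i,U_j)\le(r-1)|U_j|$ and discover (correctly) that this aggregate bound is worthless. Instead, keep the constraint local and multiply: the bipartite graph between $W=R_1\cup\dots\cup R_m$ and $U_j$ decomposes into the $m|U_j|$ edge-disjoint blocks $R_i\times\{u\}$, and in each block the adjacency pattern is one of only $2^r-1$ possibilities out of $2^r$, since the all-edges pattern is forbidden by badness. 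Because the $R_i$ are vertex-disjoint these blocks partition the pairs in $W\times U_j$, so the number of admissible bipartite graphs on $W\times U_j$ is at most
\[
(2^r-1)^{m|U_j|}\;=\;2^{\,|W||U_j|}\bigl(1-2^{-r}\bigr)^{m|U_j|}\;\le\;2^{\,|W||U_j|}\,e^{-m|U_j|/2^r}\;\le\;2^{\,|W||U_j|-mn/2^{2r}},
\]
using $|U_j|\ge n/(2r)$ from balancedness. This is exactly the bound $2^{|W||U_j|-mn/4^r}$ you set as your target, obtained in one line; no uniform density, internal sparsity or $\Kr$-freeness is invoked at this point. (The paper phrases the same computation as $2^{t_r(n)-|U_j||X(G)|}(2^r-1)^{|U_j||X(G)|/r}$ together with $2^r-1\le 2^re^{-1/2^r}$.)

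Everything else in your write-up is sound and matches the paper's bookkeeping: the encoding by $(\U,j,W,$ interior edges, cross-edges outside $W\times U_j$, block$)$, and the verification that the auxiliary factors $r^{n}$, $2^{n}$ and $\exp(O(n^{2-1/r^2}\log n))$ are all $2^{o(mn/2^{3r})}$ because $m\ge 2^{-8r}n$ and $r\le(\log n)^{1/4}$. Your preliminary paragraph bounding $e(G)$ is true but, as you yourself note, plays no role in the count and can be deleted. The one caution about your closing speculation: for $m$ in the range considered here the block $W\times U_j$ is \emph{not} forced to be nearly empty (each $u$ may miss only one vertex of each $R_i$), so the "large nearly-empty bipartite block contradicting uniform density" picture cannot be made to work; the per-block product count above is the correct and essentially only mechanism.
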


\begin{proof}
Let $m \ge 2^{-8r} n$, and consider the number of ways of constructing a graph $G \in \Q(n,r,m)$. We have at most $r^n$ choices for the partition $\U$, at most $\binom{n}{r}^m$ choices for the set $X(G)$, and $r$ choices for $j = j(G)$. Moreover, since $2^r-1 \le 2^r e^{-1/2^r}$, we have at most 
$$2^{t_r(n) - |U_j||X(G)|} \big( 2^r - 1 \big)^{|U_j||X(G)|/r} \, \le \, 2^{t_r(n) - mn / 2^{2r}}$$
choices for the edges between different parts of $\U$, since $X(G)$ is composed of $r$-sets that are bad towards $U_j$, and $G$ is balanced. Finally, we have at most $n^{O(n^{2-1/r^2})}$ choices for the edges inside parts of $\U$, since $G$ is $n^{2-1/r^2}$-close to being $r$-partite.

It follows that
$$\left|\Q(n,r,m)\right| \, \le \, r^n\cdot \binom{n}{r}^m\cdot r\cdot n^{O(n^{2-1/r^2})} \cdot 2^{t_r(n) - mn / 2^{2r}} \, \le \, 2^{t_r(n) - mn / 2^{3r}}$$ 
as required, since $m \ge 2^{-8r} n$, so $n^{2 - 1/r^2} \log n \ll 2^{-3r} m n$. 
\end{proof}

From now on, let us fix a function $1 \le m = m(n) \le 2^{-8r} n$. We are ready to define the bipartite graph $F_m$. 

\begin{defn}\label{def:PhimHm}
Define a map $\Phi_m \colon \Q(n,r,m) \to 2^{\K(n,r)}$ by placing $H \in \Phi_m(G)$ if and only if $H$ can be constructed from $G$ by first removing all edges of $G$ that are incident to $X(G)$, and then adding an arbitrary subset of the edges between $X(G)$ and $V(G) \setminus \big( X(G) \cup U_{j(G)} \big)$.
 
Let $F_m$ be the bipartite graph with edge set $\{ (G,H) : H \in \Phi_m(G) \}$. Moreover, for each $H \in \K(n,r)$, let us write $\Phi_m^{-1}(H) = \{ G \in \Q(n,r,m) : H \in \Phi_m(G) \}$. 
\end{defn}

We first observe that the map $\Phi_m$ is well-defined.

\begin{lemma}
If $G \in \Q(n,r,m)$ and $H \in \Phi_m(G)$, then $H$ is $\Kr$-free. 
\end{lemma}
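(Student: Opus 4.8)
The plan is to show directly that the construction in Definition~\ref{def:PhimHm} cannot create a copy of $K_{r+1}$. Write $X = X(G)$, $j = j(G)$ and $\U = (U_1,\dots,U_r)$ for the fixed optimal partition of $G$. Recall that $X$ is a disjoint union of $r$-sets $R^{(j)}_1,\dots,R^{(j)}_{\ell(j)}$, each of which is \emph{bad} towards $U_j$, i.e. has no common neighbour in $U_j$. To obtain $H$ we delete every edge of $G$ meeting $X$, and then add only edges from $X$ to $V(G) \setminus (X \cup U_j)$. The key structural observations are: (i) in $H$ there are no edges between $X$ and $U_j$; (ii) the only new edges have both endpoints outside $U_j$ (one endpoint in $X$); and (iii) on $V(G)\setminus X$ the graph $H$ agrees with $G$, so $H[V(G)\setminus X]$ is $\Kr$-free.

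The main step is then a short case analysis on where a hypothetical copy $K$ of $\Kr$ in $H$ sits. If $V(K)\cap X=\emptyset$ then $K$ lies in $H[V(G)\setminus X] = G[V(G)\setminus X]$, contradicting that $G$ is $\Kr$-free. So $K$ contains a vertex $x\in X$. Since $K$ has $r+1\ge 3$ vertices, $K$ must contain at least two vertices outside any single part of an $r$-partition; in particular, as the sets $R^{(j)}_i$ partition $X$ into $r$-sets and no edge of $H$ joins $X$ to $U_j$, all vertices of $K$ lie in $V(G)\setminus U_j$. But $V(G)\setminus U_j$ is covered by the $r-1$ parts $\{U_i : i\ne j\}$, so by pigeonhole two vertices $u,v$ of $K$ lie in the same part $U_i$ with $i\ne j$; the edge $uv$ then lies neither among the deleted edges (it is not incident to $X$ — wait, it could be if $u$ or $v\in X$). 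To handle this cleanly, split further: if $uv$ is incident to $X$ then it is not an edge of $H$ at all, since the only added edges go to $V(G)\setminus(X\cup U_j)$ and hence never stay inside a part $U_i$; if $uv$ is not incident to $X$ then it is an interior edge of $G$ inside $U_i$, but then (as $K$ is a clique on $r+1$ vertices one of which is $x\in X\subset V(G)\setminus U_j$ and the rest avoid $U_j$) one can walk along $K$ and, using that each $R^{(j)}_i$ is bad towards $U_j$, derive the needed contradiction directly with $\Kr$-freeness of $G$.

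Concretely, the cleanest argument avoids the pigeonhole detour: suppose $K$ is a copy of $\Kr$ in $H$ meeting $X$, and let $Y = V(K)\cap X$, $Z = V(K)\setminus X$. Every edge of $H$ inside $Z$ is an edge of $G$, so $Z$ spans a clique in $G$ disjoint from $X$; every vertex of $Y$ is $H$-adjacent to every vertex of $Z$, and since the added edges avoid $U_j$, this forces $Z \subset V(G)\setminus U_j$. Now pick any $y\in Y$; it lies in some $R^{(j)}_i$. The $r$ vertices of $R^{(j)}_i$, being bad towards $U_j$, have no common neighbour in $U_j$; on the other hand $Z\cup\{y\}$ is a clique of size $r+1-|Y|+1 \le r$ in $G$ avoiding $U_j\cup X$ except possibly — here one uses that $G$ is $\Kr$-free together with $m(G)=m$ and the maximality of the collection $X^{(j)}$ to reach the contradiction, essentially mirroring the argument of Lemma~\ref{lem:mG:lbound}. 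I expect the main obstacle to be nailing down exactly this last point: carefully tracking which edges survive or are added so that the clique $K$ is forced entirely into $V(G)\setminus U_j$, and then extracting from the badness of the $R^{(j)}_i$'s (and from $G$ being $\Kr$-free) a clean contradiction without appealing to anything about $H$ that we have not justified. Once the edge bookkeeping in the three regimes ($Z$ only, $Y$–$Z$, $Y$ only) is written out, the contradiction with $\Kr$-freeness of $G$ is immediate.
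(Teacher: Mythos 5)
Your setup and the first case are fine: you correctly note that $H$ agrees with $G$ off $X=X(G)$, that every $H$-edge meeting $X$ goes from $X$ to $V(G)\setminus(X\cup U_j)$, and hence that a copy $K$ of $K_{r+1}$ in $H$ must meet $X$, with $Z=V(K)\setminus X$ spanning a clique of $G$ contained in $V(G)\setminus(X\cup U_j)$. This is exactly the paper's route. But the proof does not close, and the mechanism you gesture at for the final contradiction is the wrong one. Two points. First, you never record that $X$ is an independent set in $H$ (all $G$-edges meeting $X$ are deleted, and every added edge has exactly one endpoint in $X$), so $|Y|=1$ and therefore $|Z|=r$ exactly; your arithmetic ``$r+1-|Y|+1\le r$'' is in fact false for $|Y|=1$. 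Second, the badness of the set $R^{(j)}_i$ containing $y$ is irrelevant, and the claim that $Z\cup\{y\}$ is a clique in $G$ is false: the $H$-edges at $y$ are precisely the arbitrarily added ones and need not be edges of $G$.

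The missing step is an application of the \emph{maximality} of the collection $X^{(j)}$ of vertex-disjoint bad $r$-sets, applied to $Z$ rather than to anything involving $y$. Since $Z$ is an $r$-set contained in $V(G)\setminus(X\cup U_j)$, it is disjoint from every $R^{(j)}_i$; if $Z$ had no common neighbour in $U_j$ it would be bad towards $U_j$ and could be added to $X^{(j)}$, contradicting maximality. Hence $Z$ has a common neighbour $u\in U_j$ in $G$, and since $Z$ is an $r$-clique of $G$, the set $Z\cup\{u\}$ is a copy of $K_{r+1}$ in $G$ --- the desired contradiction. This is the paper's (three-line) argument; your write-up reaches its doorstep but explicitly leaves this key step open and points toward the badness of $R^{(j)}_i$ and a non-existent clique $Z\cup\{y\}$ in $G$, neither of which can complete the proof.
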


\begin{proof}
This follows easily from the fact that $G$ is $\Kr$-free, and the maximality of $X(G)$. Indeed, if there exists a copy of $\Kr$ in $H$, then it must contain a vertex of $X(G)$, and therefore it must contain no other vertices of $X(G) \cup U_{j(G)}$. Hence it contains exactly $r$ vertices of $V(G) \setminus \big( X(G) \cup U_{j(G)} \big)$, and by the maximality of $X(G)$ these have a common neighbor in $U_{j(G)}$. But this contradicts our assumption that $G$ is $\Kr$-free, as required.
\end{proof}

We are now ready to prove our first bound on the degrees in $F_m$. 

\begin{lemma}\label{lem:outdegrees}
For every $G \in \Q(n,r,m)$, 
$$\log_2 |\Phi_m(G)| \, \ge \, \bigg( 1 - \frac{1}{r} - \frac{1}{2^{5r}} - \frac{mr}{n} \bigg) mnr.$$   
\end{lemma}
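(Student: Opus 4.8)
The plan is to observe that $\Phi_m(G)$ is, up to a bijection, just the set of all subsets of one fixed set of ``potential'' edges, so that $|\Phi_m(G)|$ is exactly a power of two whose exponent is a product of two set sizes, and then to pin down those two sizes.

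First I would record two elementary facts about $G \in \Q(n,r,m)$ and its fixed data $\U = (U_1,\dots,U_r)$, $j = j(G)$, $X(G)$. Since $X(G) = R^{(j)}_1 \cup \dots \cup R^{(j)}_{\ell(j)}$ is a \emph{disjoint} union of $\ell(j) = m(G) = m$ sets, each of size exactly $r$, we have $|X(G)| = mr$. Moreover each $R^{(j)}_i$ lies in $V(G) \setminus U_j$, so $X(G) \cap U_j = \emptyset$, and hence the set $Y := V(G) \setminus \big( X(G) \cup U_j \big)$ has size $|Y| = n - mr - |U_j|$.

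Next I would use the description of $\Phi_m$ in Definition~\ref{def:PhimHm}: a graph $H \in \Phi_m(G)$ is obtained from $G$ by deleting every edge incident to $X(G)$ and then adding an arbitrary subset of the $|X(G)|\cdot|Y|$ pairs between $X(G)$ and $Y$. Thus every $H \in \Phi_m(G)$ agrees with $G$ on all pairs not meeting $X(G)$, has no edge inside $X(G)$ and no edge between $X(G)$ and $U_j$, so distinct choices of the added subset yield distinct graphs $H$; together with the preceding lemma (each such $H$ is $\Kr$-free, hence lies in $\K(n,r)$) this gives $|\Phi_m(G)| = 2^{|X(G)|\cdot|Y|}$. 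Finally, since $G$ is balanced we have $|U_j| \le n/r + 2^{-5r}n$, so
$$\log_2 |\Phi_m(G)| \;=\; mr\big(n - mr - |U_j|\big) \;\ge\; mr\Big( n - mr - \frac{n}{r} - \frac{n}{2^{5r}} \Big) \;=\; \Big(1 - \frac{1}{r} - \frac{1}{2^{5r}} - \frac{mr}{n}\Big) mnr,$$
which is the claim.

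There is no genuine obstacle here; the argument is essentially the observation that $|\Phi_m(G)|$ is $2$ to the power (a product of two cardinalities). The only points that need a sentence of care are that $|X(G)|$ equals $mr$ \emph{exactly} (the $R^{(j)}_i$ are vertex-disjoint of size $r$, and $\ell(j(G)) = m$ by definition of $\Q(n,r,m)$), that $X(G)$ is disjoint from $U_{j(G)}$, and that distinct subsets of the potential cross-edge set produce distinct graphs $H$ (all edges of $H$ outside $X(G)$ being forced to coincide with those of $G$).
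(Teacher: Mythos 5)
Your proposal is correct and is essentially the paper's own argument: identify the $|X(G)|\cdot|V(G)\setminus(X(G)\cup U_{j(G)})|$ potential cross-edges, note $|X(G)|=mr$, and use balancedness to bound $|U_{j(G)}|$ from above. Your extra remarks on injectivity and on $X(G)\cap U_{j(G)}=\emptyset$ are accurate but only make explicit what the paper leaves implicit.
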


\begin{proof}
This follows immediately from the fact that $G$ is balanced. Indeed, we have two choices for each of the
\begin{equation}\label{eq:counting:outdegrees}
|X(G)| \cdot \big| V(G) \setminus \big( X(G) \cup U_{j(G)} \big) \big| \, \ge \, mr \cdot \bigg( 1 - \frac{1}{r} - \frac{1}{2^{5r}} - \frac{mr}{n} \bigg) n 
\end{equation}
potential edges between $X(G)$ and $V(G) \setminus \big( X(G) \cup U_{j(G)} \big)$.  
\end{proof}

In order to bound the degrees in $F_m$ of vertices in $\K(n,r)$, we will need the following lemma, which counts the optimal partitions in the neighborhood of such a vertex. We note that here, the upper bound on $m$ from Lemma~\ref{lem:mG:ubound} is crucial.

\begin{lemma}\label{lem:diffnumparts}
For each $H \in \K(n,r)$, there are at most $2^{n / 2^{3r}}$ distinct partitions $\U$ of $V(H)$ such that $\U$ is an optimal partition of some graph $G \in \Phi_m^{-1}(H)$.
\end{lemma}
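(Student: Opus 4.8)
The plan is to fix $H \in \K(n,r)$ and control how much two partitions $\U$, $\U'$ can differ if both arise as optimal partitions of graphs $G, G' \in \Phi_m^{-1}(H)$. The key observation is that every $G \in \Phi_m^{-1}(H)$ is obtained from $H$ by deleting and re-adding only edges incident to its set $X(G)$ (of size $mr \le 2^{-8r} n r$), so $G$ and $H$ differ in a relatively small number of edges; in particular, all but at most $2 mr n \le 2^{-8r+1} n^2$ edges of $G$ coincide with those of $H$. Since $G \in \Q(n,r,m)$ is balanced, internally sparse, uniformly dense, and $\U$ is optimal for $G$, I want to argue that $\U$ is "almost" an optimal partition of $H$ itself — more precisely, that $\U$ agrees with a fixed optimal partition $\U_0$ of $H$ on all but $O(n/2^{cr})$ vertices for a suitable constant. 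Given such a statement, the number of possible $\U$ is at most $\binom{n}{O(n/2^{cr})} r^{O(n/2^{cr})} \le 2^{n/2^{3r}}$, which is the claimed bound.

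The first step is to show that any optimal partition $\U$ of any $G \in \Phi_m^{-1}(H)$ is close to optimal for $H$: since $e(H) \triangle e(G)$ is small, the number of interior edges of $\U$ viewed inside $H$ exceeds the minimum over $r$-partitions of $H$ by at most $O(mrn)$. The second, and main, step is a stability argument: if $\U$ and $\U_0$ are two $r$-partitions of an $n$-vertex graph $H$, both having nearly the minimum number of interior edges, and $H$ is (essentially) uniformly dense and balanced with respect to optimal partitions, then $\U$ and $\U_0$ must be close as partitions — i.e. after relabelling the parts, $\sum_i |U_i \triangle (U_0)_i|$ is small. The mechanism is the usual one: if some part $U_i$ split a large chunk $A$ of some part $(U_0)_j$ away from the rest, then by uniform density $A$ would send many edges to the other parts, making $\U$ far from optimal, a contradiction; one runs this for each part and each $j$, using that $|A| \ge 2^{-10r} n$ is exactly the threshold at which uniform density kicks in, and the sub-threshold discrepancies contribute only $O(r \cdot 2^{-10r} n) = O(n/2^{9r})$ misclassified vertices.

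The main obstacle will be step two: making the stability argument quantitative enough to land at $2^{n/2^{3r}}$ rather than something weaker, while only assuming the uniform-density/internal-sparsity/balance hypotheses we actually have (which are phrased for $G$, not $H$), and being careful that these hold for enough of the relevant graphs — one should restrict attention to those $G \in \Phi_m^{-1}(H)$ that are genuinely in $\Q(n,r,m)$, where these properties are guaranteed. One also has to handle the transfer of uniform density from $G$ to $H$: since $G$ and $H$ agree outside a set of $mr$ vertices, uniform-density-type estimates for $G$ between large sets $A \subset U_i$, $B \subset U_j$ transfer to $H$ with an error of at most $mr \cdot n \le 2^{-8r} n^2$ edges, which is negligible compared to $|A||B|/32 \ge 2^{-20r} n^2 / 32$ only after we shrink the comparison threshold slightly; choosing the constants in the exponents with a little room (as the paper does throughout, e.g. $2^{-10r}$ vs $2^{-5r}$ vs $2^{-22r}$) absorbs this. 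Once the "all optimal partitions of $\Phi_m^{-1}(H)$-graphs lie within Hamming distance $O(n/2^{9r})$ of a single $\U_0$" claim is established, the counting bound $\binom{n}{n/2^{9r}} \cdot r^{n/2^{9r}} \le 2^{n/2^{3r}}$ is immediate, using $r \le (\log n)^{1/4}$.
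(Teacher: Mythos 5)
Your overall strategy matches the paper's: use uniform density together with closeness to $r$-partiteness to show that any two optimal partitions arising from graphs in $\Phi_m^{-1}(H)$ essentially coincide, and then count the partitions within a small Hamming ball. However, there is a genuine quantitative gap in the step you yourself flag as delicate, namely the transfer between $G$ and $H$. Your proposed mechanism is to carry the uniform-density estimate across with an additive error equal to the number of edges incident to $X(G)$, which is at most $mrn \le r2^{-8r}n^2$. But at the scale where uniform density is available, $|A| = |B| \ge 2^{-10r}n$, one only has $|A||B|/32 \ge 2^{-20r-5}n^2$, which is \emph{much smaller} than $r2^{-8r}n^2$ --- the inequality goes the wrong way, so the error swamps the main term. (Your sentence asserting that $mr\cdot n \le 2^{-8r}n^2$ is ``negligible compared to $|A||B|/32 \ge 2^{-20r}n^2/32$'' is false as written.) Enlarging the size threshold for $A,B$ to roughly $\sqrt{r}\,2^{-4r}n$ so that $|A||B|/32$ dominates $mrn$ then inflates the number of misclassified vertices to order $r^2 2^{-4r}n$ per part, and the resulting count $\binom{n}{r^2 2^{-4r}n}^r$ is of order $2^{\Theta(r^4 2^{-4r}n)}$, which exceeds $2^{n/2^{3r}}$ for small $r$; so the constants do not close without a further idea.

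The paper's proof avoids this entirely: given $G_1,G_2 \in \Phi_m^{-1}(H)$ with optimal partitions $\U$ and $\V$, it supposes some $U_i$ meets two parts $V_j, V_{j'}$ in more than $2^{-8r}n + 2mr$ vertices each, and sets $A = (U_i\cap V_j)\setminus(X(G_1)\cup X(G_2))$, $B = (U_i\cap V_{j'})\setminus(X(G_1)\cup X(G_2))$. Deleting the at most $2mr$ \emph{vertices} of $X(G_1)\cup X(G_2)$ (rather than bounding an edge discrepancy) makes the transfer exact: every edge of $G_2$ between $A$ and $B$ is an edge of $H$ and hence of $G_1$, so uniform density of $G_2$ yields more than $2^{-16r-5}n^2$ edges of $G_1$ interior to $U_i$, contradicting that $G_1$ is $n^{2-1/r^2}$-close to $r$-partite. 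This gives $|U_i\setminus V_i| \le r(2^{-8r}n+2mr) \le 2^{-6r}n$ after relabelling, and the count $\bigl(\sum_{k\le 2^{-6r}n}\binom{n}{k}\bigr)^r \le 2^{n/2^{3r}}$ follows. You should replace your edge-error transfer with this vertex-excision device (and you can then also drop the detour through a near-optimal partition of $H$ itself, which is unnecessary: one compares $\U$ and $\V$ directly, never needing $H$ to be uniformly dense or balanced).
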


\begin{proof}
We will use the fact that each $G \in \Phi_m^{-1}(H)$ is uniformly dense and $n^{2-1/r^2}$-close to being $r$-partite to show that the optimal partitions in question must be `close' to one another. 

To be precise, let $G_1, G_2 \in \Phi_m^{-1}(H)$, and let $\U = (U_1,\ldots,U_r)$ be an optimal partition of $G_1$ and $\V = (V_1,\ldots,V_r)$ be an optimal partition of $G_2$. We claim that
$$\big| \big\{ j \in [r] \,:\,  |U_i \cap V_j| > 2^{-8r} n + 2mr\big\} \big| \, \le \, 1$$ 
for every $i \in [r]$. Indeed, suppose that 
$$\big| U_i \cap V_{j} \big| > 2^{-8r} n+2mr \qquad \text{and} \qquad \big| U_i \cap V_{j'} \big| > 2^{-8r} n+2mr,$$
set $A = \big( U_i \cap V_j \big) \setminus \big( X(G_1) \cup X(G_2) \big)$ and $B = (U_i \cap V_{j'}) \setminus  \big( X(G_1) \cup X(G_2) \big)$, and note that, since $G_2$ is uniformly dense, we have $e_{G_2}(A,B) > |A||B| / 32 > 2^{-16r-5} n^2$. But these edges are all contained in $U_i$, so this contradicts the fact that $G_1$ is $n^{2-1/r^2}$-close to being $r$-partite, as required.
 
 It follows that (by renumbering the parts if necessary) we have 
$$\big| U_i \setminus V_i \big| \, \le \, r \cdot \big( 2^{-8r} n + 2mr \big) \, \le \, 2^{-6r} n$$
for every $i \in [r]$, where second inequality follows since $m \le 2^{-8r} n$. Set $D_i = U_i \setminus V_i$, and observe that the partition $\V$ and the collection $(D_1,\ldots,D_r)$ together determine $\U$. It follows that the number of optimal partitions is at most
\begin{equation}\label{eq:binom_calc}
  \bigg( \sum_{k = 0}^{2^{-6r} n}\binom{n}{k} \bigg)^r \, \le \, n^r \cdot \binom{n}{2^{-6r}n}^r \, \le \, 2^{r\log n} \cdot \big( e 2^{6r} \big)^{r 2^{-6r} n} \, \le \, 2^{n/2^{3r}},
\end{equation}
as required. 
\end{proof}

We can now bound the degrees on the right. Recall that in Definition~\ref{def:mG} we chose a `canonical' optimal partition for each graph $G \in \Q(n,r)$.

\begin{lemma}\label{lem:indegrees}
We have
$$\log_2 \big| \Phi_m^{-1}(H) \big| \, \le \, \bigg( 1 - \frac{1}{r} - \frac{1}{2^{4r}} \bigg) mnr$$
for every $H \in \K(n,r)$. 
\end{lemma}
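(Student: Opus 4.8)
The plan is to bound $|\Phi_m^{-1}(H)|$ by reconstructing each $G\in\Phi_m^{-1}(H)$ from $H$. Write $X=X(G)$, let $\U=(U_1,\dots,U_r)$ be the canonical optimal partition of $G$ and $j=j(G)$. Since $\Phi_m$ only alters edges incident to $X$, we have $G-X=H-X$, so $G$ is completely determined by the four pieces of data $\U$, $j$, $X$, and the set of edges of $G$ incident to $X$. By Lemma~\ref{lem:diffnumparts} there are at most $2^{n/2^{3r}}$ possibilities for $\U$, at most $r$ for $j$, at most $\binom{n}{mr}\le n^{mr}$ for $X$, and at most $2^{\binom{mr}{2}}$ for the edges lying inside $X$. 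It therefore suffices to bound, for fixed $\U,j,X$, the number of choices for the bipartite graph of edges between $X$ and $V(G)\setminus X$, i.e.\ the product over $x\in X$ of the number of choices for $N_G(x)\cap(V(G)\setminus X)$.

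The heart of the argument is the following claim: with $\theta:=2^{-5r}n$, each vertex $x\in X$ satisfies $|N_G(x)\cap U_\ell|\ge\theta$ for at most $r-2$ indices $\ell$. Indeed, if this held for $r-1$ distinct parts $U_{\ell_1},\dots,U_{\ell_{r-1}}$, set $W_i=N_G(x)\cap U_{\ell_i}$ and apply Lemma~\ref{lem:embed} with $\alpha=1/32$: since $G$ is uniformly dense we have $e(A,B)>|A||B|/32$ for all $A\subseteq U_{\ell_i}$, $B\subseteq U_{\ell_{i'}}$ with $|A|,|B|\ge 2^{-10r}n$ (the unequal-size version follows from the stated equal-size one by averaging over equal-size subsets), and $\theta$ is large enough that $\alpha^{r-1}|W_i|\ge 2^{-10r}n$. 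Then Lemma~\ref{lem:embed} produces a copy of $K_{r-1}$ inside $N_G(x)$, hence a copy of $\Kr$ in $G$ — a contradiction. (The hypotheses that $G$ is internally sparse and balanced are also available: the former guarantees that the part containing $x$ is never one of the $\le r-2$ large parts, and the latter gives $|U_\ell|\le n/r+2^{-5r}n$, used below.)

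Given the claim, the number of choices for $N_G(x)\cap(V(G)\setminus X)$ is at most $2^r$ (choice of which $\le r-2$ parts are ``large'') times $2^{(r-2)\max_\ell|U_\ell|}$ (an arbitrary neighbourhood inside those parts) times at most $\binom{n}{r\theta}\cdot\mathrm{poly}(n)$ (a neighbourhood of total size $<r\theta$ spread over the remaining parts), where $\log_2\binom{n}{r\theta}=O(r^2\,2^{-5r}n)$. Taking $\log_2$, multiplying over the $mr$ vertices of $X$, and using balancedness, the reconstruction gives
$$
\log_2|\Phi_m^{-1}(H)|\ \le\ mnr-2mn+\Big[\tfrac{n}{2^{3r}}+\tbinom{mr}{2}+mr^2+mr^2\,2^{-5r}n+O\big(mr^3\,2^{-5r}n\big)+O(mr\log n)\Big],
$$
where the $mnr-2mn$ comes from the main ``$(r-2)\cdot\tfrac{n}{r}$ per vertex'' term. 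It then remains to check that the bracketed error is at most $\tfrac12 mn$; each summand is handled routinely using $2\le r\le(\log n)^{1/4}$ and $m\le 2^{-8r}n$ (for instance $\binom{mr}{2}\le 2^{-8r}nr^2\cdot m\ll mn$), the tightest being the $O(mr^3\,2^{-5r}n)$ entropy term, which needs $r^3\,2^{-5r}$ small — true for all $r\ge 2$. This yields $\log_2|\Phi_m^{-1}(H)|\le mnr-\tfrac32 mn\le\big(1-\tfrac1r-2^{-4r}\big)mnr$, since $\tfrac12\ge 2^{-4r}r$ for $r\ge2$, as claimed.

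The main obstacle is the structural claim: the entire argument hinges on $N_G(x)$ being large in at most $r-2$ — not $r-1$ — parts, which removes roughly one part's worth of freedom, about $mn$ bits, and this is exactly what is needed to beat the out-degree estimate of Lemma~\ref{lem:outdegrees} (the two bounds differing only by about $2^{-4r}mnr$). Consequently one must choose the threshold $\theta$ simultaneously large enough for Lemma~\ref{lem:embed} to apply and small enough that the ``small-part'' contribution stays negligible, and then track all lower-order terms carefully; the borderline case $r=2$ (where $r-2=0$, so $N_G(x)$ is confined to $O(\theta)$ vertices and the estimate is in fact very comfortable) should be noted explicitly. Everything else is bookkeeping.
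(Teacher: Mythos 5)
Your reconstruction framework (fix $\U$, $j$, $X$, then count possible neighbourhoods of the vertices of $X$) matches the paper's, but the structural claim at the heart of your argument is false, and the error is in the deduction you use to prove it. If $x\in X$ has $|N_G(x)\cap U_{\ell_i}|\ge\theta$ for $r-1$ parts $U_{\ell_1},\dots,U_{\ell_{r-1}}$, then Lemma~\ref{lem:embed} applied to the sets $W_i=N_G(x)\cap U_{\ell_i}$ produces a copy of $K_{r-1}$ in $N_G(x)$, which together with $x$ gives only a copy of $K_r$ --- not of $\Kr=K_{r+1}$ --- so there is no contradiction with $G$ being $\Kr$-free. (This is why Lemma~\ref{lem:struc:maxdeg} needs the neighbourhood to be large in \emph{all} $r$ parts, which it gets from optimality of the partition.) And the claim really is false, not just unproven: in any graph close to the Tur\'an graph, a typical vertex has roughly $n/r$ neighbours in each of the $r-1$ parts other than its own, so restricting $N_G(x)$ to $r-2$ large parts cannot be right. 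This is also visible from the numerology you yourself point out: your claim would give an upper bound of about $mnr-2mn$, beating the lower bound of Lemma~\ref{lem:outdegrees} by a full $mn$ rather than by the delicate $\Theta(2^{-4r}mnr)$ margin, which should have been a warning sign.

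The correct source of the saving --- which your proof never touches --- is the defining property of $X(G)$: it is a disjoint union of $m$ $r$-sets that are \emph{bad} towards $U_j$, i.e.\ each such $r$-set $R$ has no common neighbour in $U_j$. Hence for each $u\in U_j$ the edge pattern between $u$ and $R$ has only $2^r-1$ possibilities instead of $2^r$, giving at most $(2^r-1)^{|U_j|m}\le 2^{(1/r+2^{-5r})mnr}e^{-mn/r2^r}$ choices for the edges between $X(G)$ and $U_j$. Combined with the trivial $2^{(1-2/r+O(2^{-3r}))n}$ bound per vertex of $X$ for edges to $V(G)\setminus U_j$ (using balancedness for the cross parts and internal sparseness for the own part, exactly as in your ``small parts'' entropy estimate), this yields $(1-1/r-2^{-4r})mnr$ after absorbing the $2^{n/2^{3r}}$ partition count from Lemma~\ref{lem:diffnumparts}. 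You listed $j=j(G)$ among the data to be chosen but never used badness towards $U_j$; without it the approach cannot close, since the best one can say about an arbitrary vertex is that its neighbourhood is large in at most $r-1$ parts, which gives an upper bound slightly \emph{above} $(1-1/r)mnr$.
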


\begin{proof}
Let us fix a partition $\U = (U_1,\ldots,U_r)$, and count the number of graphs $G \in \Q(n,r,m)$ with $H \in \Phi_m(G)$ whose optimal partition is $\U$. To do so, first note that we have $\binom{n}{r}^m \le n^{mr}$ choices for $X(G)$, and at most $r$ choices for $j = j(G)$. Now, since $G$ is balanced, i.e., $\big| |U_i| - n/r \big| \le n/2^{5r}$ for each $i \in [r]$, there are at most $(1 - 2/r + 2/2^{5r})n$ possible neighbours for each $v \in X(G)$ not in its own part of $\U$ or in $U_j$. Moreover, since $G$ is internally sparse, each vertex $v \in X(G)$ has at most $2^{-5r} n$ neighbours in its own part of $\U$.  Thus we have at most
$$2^{(1 - 2/r + 2/2^{5r})n} \sum_{k = 0}^{2^{-5r} n}\binom{n}{k}  \, \le \, 2^{(1 - 2/r + 1/2^{3r})n}$$
choices for the edges between each vertex $v \in X(G)$ and $V(G) \setminus U_j$, by bounding as in \eqref{eq:binom_calc}. Finally, by the definition of bad sets, and since $G$ is balanced, we have at most
$$(2^r - 1)^{(1/r + 1/2^{5r})mn} \, \le \, 2^{(1/r + 1/2^{5r})mnr} e^{- mn/r 2^r} \, \le \, 2^{(1/r - 3/2^{3r})mnr}$$
choices for the edges between $X(G)$ and $U_j$.

Since, by Lemma~\ref{lem:diffnumparts}, we have at most $2^{n/2^{3r}}$ choices for the partition $\U$, it follows that
\begin{eqnarray*}  
\log_2 \big| \Phi_m^{-1}(H) \big| \, &\le& \, mr\log n+\log r+\bigg( 1 - \frac{2}{r} + \frac{1}{2^{3r}} + \frac{1}{r} - \frac{3}{2^{3r}} + \frac{1}{2^{3r}} \bigg) mnr \\ 
&\le& \, \bigg( 1 - \frac{1}{r} - \frac{1}{2^{4r}} \bigg) mnr, 
\end{eqnarray*}
as claimed.
\end{proof}

Finally we put the pieces together and prove Proposition~\ref{prop}.

\begin{proof}[Proof of Proposition~\ref{prop}]
We claim first that 
\begin{equation}\label{eq:QmvsK}
|\Q(n,r,m)| \, \le \, 2^{-2^{-5r} mnr} \cdot |\K(n,r)|
\end{equation}
for every $m \le 2^{-8r} n$. To prove this, we simply double count the edges of $F_m$, using Lemmas~\ref{lem:outdegrees} and~\ref{lem:indegrees}. Indeed, we have
$$\log_2 \bigg( \frac{|\Q(n,r,m)|}{|\K(n,r)|} \bigg) \, \le \, \bigg( 1 - \frac{1}{r} - \frac{1}{2^{4r}} \bigg) mnr - \bigg( 1 - \frac{1}{r} - \frac{1}{2^{5r}} - \frac{mr}{n} \bigg) mnr,$$
which implies~\eqref{eq:QmvsK} since $m \le 2^{-8r} n$. 

Summing~\eqref{eq:QmvsK} over $m$, and recalling that $G$ is $n^{2 - 1/r^2}$-close to being $r$-partite, we obtain 
$$\left|\Q\left(n,r\right)\right| \, \le \, 
\sum_{m = 1}^{2^{-8r}n} 2^{-2^{-5r} mnr} \cdot |\K(n,r)| \,+\, \sum_{m = 2^{-8r}n}^n 2^{t_r(n) - mn / 2^{3r}} \, \le \, 2^{-2^{-6r} n} \cdot |\K(n,r)|,$$
by Lemmas~\ref{lem:mG:lbound} and~\ref{lem:mG:ubound} (since $|\K(n,r)| \ge 2^{t_r(n)}$), as required.
\end{proof}

Finally, let us deduce Theorem~\ref{thm:main}.

\begin{proof}[Proof of Theorem~\ref{thm:main}]
By Theorem~\ref{thm:structure:approx}, almost all $\Kr$-free graphs on $n$ vertices are $n^{2-1/r^2}$-close to $r$-partite. We further showed in Lemmas~\ref{lem:struc:reg},~\ref{lem:struc:maxdeg}, and~\ref{lem:struc:well} that almost all of these graphs are either $r$-partite, or in $\Q(n,r)$. Since by Proposition~\ref{prop}, for sufficiently large $n$, the size of $\Q(n,r)$ is (almost) exponentially small compared to $\K(n,r)$, it follows that almost all $\Kr$-free graphs are $r$-partite, as required.
\end{proof}

\section*{Acknowledgements}

The fifth author would like to thank Wojciech Samotij for several interesting discussions about supersaturation and stability, and for showing him F\"uredi's proof of the Erd\H{o}s--Simonovits stability theorem. The authors would also like to thank the referees for their careful reading and valuable comments.

\end{document}